\documentclass[12pt]{article}
\usepackage[margin=1.5in]{geometry} 
\usepackage{amsmath,amsthm,amssymb,amsfonts}
\usepackage{changepage}
\usepackage{tikz}
\usepackage[shortlabels]{enumitem}
\usetikzlibrary{matrix} 
 
\newcommand{\T}{\mathcal{T}}

\newcommand{\Z}{\mathbb{Z}}

\newcommand{\id}{\textnormal{id}}

\newcommand{\inv}{^{-1}}

\newcommand{\cay}{\textnormal{Cay}}

\newcommand{\res}{\upharpoonright}

\newtheorem{theorem}{Theorem}


\newtheorem{corollary}{Corollary}
\newtheorem{lemma}{Lemma} 
\newtheorem{prop}{Proposition}  
\newtheorem{prob}{Problem} 
 
\begin{document}
 
\title{Descriptive Chromatic Numbers of Locally Finite and Everywhere Two Ended Graphs}
\author{Felix Weilacher}
\maketitle

\begin{abstract}
\noindent We construct Borel graphs which settle several questions in descriptive graph combinatorics. These include ``Can the Baire measurable chromatic number of a locally finite Borel graph exceed the usual chromatic number by more than one?'' and ``Can marked groups with isomorphic Cayley graphs have Borel chromatic numbers for their shift graphs which differ by more than one?'' We also provide a new bound for Borel chromatic numbers of graphs whose connected components all have two ends.
\end{abstract}


\section{Introduction}\label{sec:intro}


A \textit{graph} on a set $X$ is a symmetric irreflexive relation $G \subset X \times X$. In this situation, the elements of $X$ are called the \textit{vertices} of $G$. Vertices $x$ and $y$ are called \textit{adjacent} if $(x,y) \in G$, and in this case the pair $\{x,y\}$ is called an \textit{edge} of $G$. The \textit{degree} of a vertex is the number of other vertices adjacent to it. $G$ is called \textit{locally finite} if every vertex has finite degree, is said to have \textit{bounded degree} $d$ if every vertex has degree at most $d$, and is called \textit{$d$-regular} if every vertex has degree exactly $d$, where $d$ is some natural number. A \textit{connected component} of $G$ is an equivalence class of the equivalence relation generated by $G$. 

A (proper) \textit{coloring} of $G$ is a function, say, $c:X \rightarrow Y$ to some set $Y$ such that if $x$ and $y$ are adjacent, $c(x) \neq c(y)$. In this situation, the elements of $Y$ are called \textit{colors}. The sets $c\inv(\{y\})$ for $y \in Y$ are called \textit{color sets}. If $|Y| = k$, $c$ is called a $k$-coloring. The \textit{chromatic number} of $G$, denoted $\chi(G)$, is the least $k$ such that $G$ admits a $k$-coloring.

Descriptive graph combinatorics studies these notions in the descriptive setting: Let $X$ now be a Polish space. A graph $G$ on $X$ is called \textit{Borel} if $G$ is Borel in the product space $X \times X$. A coloring $c:X \rightarrow Y$ is called \textit{Borel} if $Y$ is also a Polish space and $c$ is a Borel function. The \textit{Borel chromatic number} of $G$, denoted $\chi_B(G)$, is the least $k$ such that $G$ admits a Borel $k$-coloring. Similarly, $c$ is called \textit{Baire measurable} if it is a Baire measurable function, and the \textit{Baire measurable chromatic number} of $G$, denoted $\chi_{BM}(G)$, is the least $k$ such that $G$ admits a Baire measurable $k$-coloring. For a survey covering this exciting emerging field, see \cite{KM}.

For a Borel graph $G$, we of course have $\chi(G) \leq \chi_{BM}(G) \leq \chi_B(G)$, but it is natural to ask just how large $\chi_{BM}(G)$ and $\chi_B(G)$ can be compared to $\chi(G)$. There are many known examples \cite{KM} where $\chi(G) = 2$ while $\chi_{BM}(G)$ and $\chi_B(G)$ are infinite. However, for graphs of bounded degree $d$, Kechris, Solecki, and Todorcevic \cite{KST} proved $\chi_B(G) \leq d+1$. We therefore restrict our attention to bounded degree graphs for the remainder of the paper.

In \cite{M15}, Marks proved that this bound is sharp, even for acyclic $G$ (so in particular, $\chi(G)=2$). Thus, $\chi_B(G)$ can be arbitrarily large compared to $\chi(G)$. On the other hand, for Baire measurable chromatic numbers, Conley and Miller proved the following \cite{CM} (Theorem B):

\begin{theorem}\label{th:cm}
Let $G$ be a locally finite Borel graph such that $\chi(G) < \aleph_0$. Then $\chi_{BM}(G) \leq 2\chi(G) - 1$.
\end{theorem}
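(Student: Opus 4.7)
The plan is to decompose $X$ into a comeager Borel set $C$ and its meager Borel complement $M = X \setminus C$, and to combine a Borel proper $k$-coloring of $G \res C$ with a Borel proper $(k{-}1)$-coloring of $G \res M$, using \emph{disjoint} color palettes of sizes $k$ and $k-1$ respectively, to obtain a Baire measurable proper coloring of $G$ with $2k-1$ colors, where $k := \chi(G)$. The reason to insist on disjoint palettes is that then properness across the $C$--$M$ interface is automatic, so the two Borel pieces combine freely.

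For the comeager piece, since $\chi(G) = k$ the set $Y \subseteq \{0,\ldots,k-1\}^X$ of proper $k$-colorings is a nonempty closed subspace of the Polish space $\{0,\ldots,k-1\}^X$. I would combine local finiteness of $G$ with a Baire-category / generic selection argument on $Y$ to construct a comeager Borel set $C \subseteq X$ on which some coloring in $Y$ descends to a Borel function $c_0 \colon C \to \{0,\ldots,k-1\}$. Local finiteness is essential here: it ensures that the neighbor-consistency constraints propagate only finitely, so they can be resolved ``Borel-locally'' on a comeager set.

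For the meager piece, I would write $M = \bigcup_n N_n$ as an increasing union of closed nowhere dense Borel sets and color layer by layer using the disjoint palette $\{k, \ldots, 2k-2\}$. The goal is to show that each nowhere dense layer admits a Borel orientation of out-degree at most $k-2$, which permits a greedy Borel $(k-1)$-coloring on $N_n \setminus N_{n-1}$, and then to glue the layers coherently into a Borel proper coloring of $G \res M$.

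The main obstacle is precisely this last step. A priori, $\chi(G \res M)$ could still equal $k$, so saving one color cannot come from graph structure alone --- it must genuinely use Baire-category smallness. The technical heart of the theorem should therefore be a lemma of the form: on a closed nowhere dense Borel set, a locally finite Borel graph with ambient chromatic number $\leq k$ admits a Borel $(k{-}1)$-coloring. Without such a lemma, the naive combination yields only $2k$ colors, and shaving off the final color to reach $2k-1$ is where the interplay between the topology of $X$, local finiteness, and the combinatorics of $\chi(G)$ has to be used in an essential way.
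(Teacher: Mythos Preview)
First, note that the paper does not itself prove this theorem: it is quoted from Conley--Miller \cite{CM}. The paper does, however, prove the two-ended analogue (Theorem~\ref{th:2ends}) using the same circle of ideas, so one can compare your proposal against that.

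Your proposal has a genuine gap, and the decomposition is oriented the wrong way. The lemma you isolate at the end --- that a locally finite Borel graph with ambient chromatic number $\le k$, restricted to a closed nowhere dense Borel set, admits a Borel $(k-1)$-coloring --- is false. A single copy of $K_k$ placed on $k$ points of a nowhere dense set already needs $k$ colors; nowhere-density of the vertex set says nothing about the internal chromatic number. So the ``save one color on the meager side'' plan cannot work, and you correctly sensed this was the weak point.

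In the Conley--Miller argument (and in the paper's proof of Theorem~\ref{th:2ends}), the roles are reversed. One first passes to a comeager $G$-invariant Borel set on which the connectedness relation has enough structure (generic hyperfiniteness in \cite{CM}; the two-ended hypothesis here). On that set one finds, in a Borel way, a set $B^*$ whose $G$-restriction has only finite components, hence a Borel $k$-coloring; one then \emph{deletes one color class} to obtain $B \subseteq B^*$ carrying a Borel $(k-1)$-coloring, and proves that the complement $X \setminus B$ \emph{also} has only finite components, hence a Borel $k$-coloring with a disjoint palette. The ``save one color'' step is thus purely combinatorial --- removing an independent set --- and the topology/category is used only to manufacture the finite-component decomposition on a comeager invariant set. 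The residual meager $G$-invariant set is then trivial: any (non-Borel) $k$-coloring there is automatically Baire measurable. Your ``generic selection from $Y$'' idea does not produce this kind of structure, and without the finite-component trick there is no mechanism to shave the last color.
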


The question ``How close to this bound can we get?" still remains. Previously, not much seems to have been known regarding this: In fact, Kechris and Marks pose the following problem \cite{KM} (Problem 4.7):

\begin{prob}\label{prob:bm1}
Is there a bounded degree Borel graph $G$ for which $\chi_{BM}(G) > \chi(G)+1$?
\end{prob}

The graphs constructed by Marks in \cite{M15} are not hyperfinite (see Section \ref{sec:measure} for a definition). Furthermore, an analogue of Theorem \ref{th:cm} holds for measure chromatic numbers if the extra assumption of hyperfiniteness is added (see Theorem  \ref{th:cmmeasure}). This lead to the question of whether the $2\chi(G)-1$ bound held for Borel chromatic numbers in the hyperfinite setting (\cite{KM}, Question 5.19). In \cite{CJMST}, though, Marks' techniques were adapted to the hyperfinite setting, giving a negative answer to this question. 

In this paper, however, we note that a certain strengthening of the hyperfiniteness assumption \textit{is} enough to get this bound. Using techniques similar to those in \cite{CM} and some results from \cite{M09}, we prove in Section \ref{sec:2ends} the following analogue of Theorem \ref{th:cm}:

\begin{theorem}\label{th:2ends}
Let $G$ be a locally finite Borel graph such that $\chi(G) < \aleph_0$, and such that every connected component of $G$ has two ends. Then $\chi_B(G) \leq 2\chi(G) - 1$.
\end{theorem}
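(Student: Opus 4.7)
The plan is to adapt the proof of Theorem~\ref{th:cm} by replacing its Baire-category step with explicit Borel structure coming from the two-ended hypothesis and \cite{M09}. Set $k = \chi(G)$; the strategy decomposes each $G$-component into a $\Z$-indexed sequence of finite ``blocks,'' colors the blocks with two alternating palettes sharing exactly one color, and coordinates the per-block colorings so that no monochromatic cross-block edge occurs.

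First, I would invoke \cite{M09} to produce a Borel equivalence relation $F$ whose classes are finite, $G$-connected subsets forming a bi-infinite $\Z$-indexed sequence $(C_n)_{n\in\Z}$ within each $G$-component, with every $G$-edge either internal to some $C_n$ or joining consecutive $C_n$ and $C_{n+1}$; we may take the classes arbitrarily large in $G$-diameter. Using the Borel $\Z$-indexing, define $\phi \colon X \to \{0,1\}$ by $\phi(C_n) = n \bmod 2$ and set $A_i = \phi^{-1}(i)$. Then $G \res A_i$ is a Borel disjoint union of finite subgraphs, each of chromatic number at most $k$, and so admits a Borel $k$-coloring via a Borel selection of $k$-colorings on finite graphs.

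Next, I would color $A_0$ from the palette $\{1,\dots,k\}$ and $A_1$ from the palette $\{k, k+1, \dots, 2k-1\}$. These overlap only in color $k$, so the only possible improper edges are those joining consecutive blocks $C_n, C_{n+1}$ whose endpoints are both assigned color $k$. To eliminate these, I would coordinate the per-block colorings so that color $k$ is placed only on the \emph{interior} of each block --- the set of vertices with no $G$-neighbor outside their block. Such a coloring automatically prevents all cross-block conflicts, since interior vertices have no neighbors outside their block. The existence of a per-block $k$-coloring of this shape reduces to a finite combinatorial lemma: for a finite subgraph $H$ of $G$ with boundary $B\subseteq V(H)$, under a suitable hypothesis on $\chi(H\res B)$, there is a proper $k$-coloring of $H$ in which color $k$ is used only on $V(H)\setminus B$. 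A Borel selection across blocks then yields a uniform choice.

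The main obstacle lies in this last coordination step: showing the $F$-classes from Step~1 can be constructed so that each class's boundary has chromatic number strictly less than $k$, uniformly and in a Borel manner. Intuitively, enlarging the $F$-classes ``dilutes'' the boundary until it becomes chromatically smaller than the whole, but rigorously establishing this requires either a finer structural use of the ends of $G$ (in the spirit of \cite{M09}) or an iterative Kempe-chain correction that locally swaps colors along conflict-carrying paths. Once this coordination step is handled, the assembled coloring is a proper Borel $(2k-1)$-coloring of $G$, establishing the theorem.
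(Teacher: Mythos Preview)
Your overall architecture---a bi-infinite block decomposition, two $k$-palettes overlapping in one color---is close in spirit to the paper, but as written there are two genuine gaps.

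First, the Borel parity map $\phi$ need not exist. Within each $G$-component the block quotient graph is a copy of the Cayley graph of $\Z$, and asking for a Borel $\phi$ with $\phi(C_n)\neq\phi(C_{n+1})$ is precisely asking for a Borel $2$-coloring of that quotient graph. In general no such coloring exists: for instance, if $G$ is (a thickening of) $F(2^\Z)$ the quotient is again a Borel graph whose components are all copies of $\Z$, and such graphs can have Borel chromatic number $3$. Enlarging the blocks does not help---the quotient structure is unchanged. So the clean split $X=A_0\sqcup A_1$ on which your whole scheme rests is not available in a Borel way; a ``Borel $\Z$-indexing'' in the sense you need does not come from \cite{M09}.

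Second, even granting a parity, the coordination step (forcing color $k$ off each block's boundary, equivalently forcing the boundary to have chromatic number $<k$) is left unresolved, and the vague fixes you mention (Kempe chains, dilution) are not obviously implementable in a Borel way. The paper avoids both obstacles simultaneously with a different decomposition: instead of alternating blocks, it takes $B^*$ to be the union of one-step thickenings $S^*$ of the separators $S\in\Psi$. This $B^*$ has finite $G$-components without any parity choice. One then $k$-colors $G\res B^*$, \emph{deletes} the color-$k$ class to obtain a $(k-1)$-colored Borel set $B$, and proves that $G\res(X\setminus B)$ still has only finite components. The key observation is that any path in $X\setminus B$ crossing a separator $S$ contains three consecutive vertices in $S^*$; since these are linked by edges, not all three can lie in the deleted (independent) color class, so one lies in $B$---a contradiction. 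This ``delete a color class and show the complement stays finite-component'' trick is exactly the missing idea: it replaces your unproven boundary lemma and removes any need for Borel parity.
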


See section \ref{sec:2ends} for a definition of two endedness. Also note that this condition is indeed a strengthening of hyperfiniteness \cite{M09}.

Similarly little seems to have been known regarding the sharpness of this bound. In fact, one of the goals of the project which led to this paper was to resolve the following:

\begin{prob}\label{prob:2ends}
Is there a bounded degree Borel graph $G$ whose connected components all have two ends for which $\chi_{B}(G) > \chi(G)+1$?
\end{prob}

In this paper, we answer Problems \ref{prob:bm1} and \ref{prob:2ends} as strongly as possible, proving the bounds in Theorems \ref{th:cm} and \ref{th:2ends} are sharp:

\begin{theorem}\label{th:main}
Let $k \geq 3$. There is a Borel $3(k-1)^2$-regular graph, say, $G_k$, such that all the connected components of $G_k$ have two ends, $\chi(G_k)=k$, and $\chi_{BM}(G_k)=\chi_B(G_k)=2k-1$.
\end{theorem}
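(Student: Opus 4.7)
The plan is to construct $G_k$ on the free part of a Bernoulli shift of $\Z$, thickened by a finite local gadget so as to achieve the target degree $3(k-1)^2$. Concretely, I would take a Polish space $X_0 = Y^\Z$ for some standard Borel $Y$, pass to the free part of the shift (call it $X$), and then blow up each point by the vertex set $[k] \times [k]$ equipped with the edge relation of the tensor product $K_k \times K_k$ (adjacency iff both coordinates differ). The edge relation of $G_k$ would then have three pieces, each contributing $(k-1)^2$ to every vertex's degree: the intra-layer edges of $K_k \times K_k$; the forward inter-layer edges, connecting $(x, a, b)$ to $(\sigma(x), a', b')$ whenever $a \neq a'$ and $b \neq b'$, where $\sigma$ is the shift; and the symmetric backward inter-layer edges. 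Two-endedness of each connected component is inherited from the $\Z$-orbit structure, and the total degree is $3(k-1)^2$ by design.

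With this construction in hand the easy direction comes quickly: projecting each vertex to the first $[k]$-coordinate of its blow-up gives a proper $k$-coloring (every edge type forces the first coordinates to differ), while the diagonal $\{(i,i) : i \in [k]\}$ in each layer is a $k$-clique, so $\chi(G_k) = k$. Theorem~\ref{th:2ends} then immediately gives $\chi_B(G_k) \leq 2k-1$, and since $\chi_{BM}(G_k) \leq \chi_B(G_k)$, it only remains to prove the matching lower bound $\chi_{BM}(G_k) \geq 2k-1$.

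For the lower bound I would adapt Marks' Baire category argument from \cite{M15} to the two-ended setting. Assume towards contradiction that $c$ is a Baire measurable $(2k-2)$-coloring of $G_k$. Pass to a comeager shift-invariant Borel set on which $c$ is continuous, then use shift-invariance together with the $S_k \times S_k$ symmetry of the blow-up to locate a generic configuration in which the coloring on a few consecutive layers has a prescribed, essentially symmetric structure. A purely combinatorial lemma about the blow-up should then show that any proper $(2k-2)$-coloring of such a window is forced to overlap the color palettes of two consecutive layers enough to create a monochromatic inter-layer edge, a contradiction.

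The main obstacle will be arranging the inter-layer edges so that this combinatorial collapse is actually unavoidable. In Marks' free-product construction one gets independent ``moves'' in each factor of the acting group; here, with only a single $\Z$-generator available, the extra degrees of freedom must come from the finite blow-up and from the Bernoulli randomness. The precise degree $3(k-1)^2$ presumably emerges from a bookkeeping argument that exactly saturates the Conley--Miller bound: the intra-layer graph, the forward bipartite graph, and the backward bipartite graph must mesh so that any economy of $k-1$ colors in one layer still forces a fresh palette of $k$ colors in the next, leaving no room for a $(2k-2)$-coloring to survive. This combinatorial calibration, rather than the descriptive-set-theoretic machinery, is the delicate heart of the proof.
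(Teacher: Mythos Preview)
Your construction is essentially the shift graph of the \emph{direct} product $\Delta_k = (Z_k \times Z_k)\times\Z$ with generating set $S\times\{-1,0,1\}$, where $S=\{(a,b)\mid a\neq 0,\ b\neq 0\}$. The paper shows explicitly (Proposition~\ref{prop:delta}) that this graph has Borel chromatic number only $k+1$, not $2k-1$: one takes a Borel set of well-spaced markers along the $\Z$-direction and, between consecutive markers, propagates the ``first coordinate'' coloring while using a single extra color to absorb the phase mismatch at each marker. Exactly the same construction works for your blow-up of $F(2^\Z)$ by $[k]\times[k]$, so your $G_k$ is Borel $(k+1)$-colorable and the lower bound you hope for is simply false for this graph.

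What is missing is a \emph{twist}. The paper's $G_k$ is the shift graph of the \emph{semidirect} product $\Gamma_k=(Z_k\times Z_k)\rtimes_\varphi\Z$, where $\varphi(a,b)=(b,a)$ swaps coordinates. The combinatorial core is then a two-step orientation argument: (i) any $(2k-2)$-coloring of a single copy of $K_k\times K_k$ is either ``horizontal'' or ``vertical'' (Lemma~\ref{lem:orientation}); (ii) in the untwisted graph this orientation is constant along the $\Z$-direction (Lemma~\ref{lem:invariance}). The twist by $\varphi$ flips horizontal and vertical between consecutive layers, so a hypothetical Baire measurable $(2k-2)$-coloring of $F(2^{\Gamma_k})$ yields, via the orientation map, a Baire measurable $2$-coloring of $F(2^\Z)$, contradicting $\chi_{BM}(F(2^\Z))=3$. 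No adaptation of Marks' determinacy or free-product games is used; the lower bound is this short reduction to the $\Z$-shift, and it only goes through because of the semidirect twist you omitted.
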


The graphs $G_k$ will be arise in the following way: A \textit{marked group} (in this paper) is a pair $(\Gamma,S)$, where $\Gamma$ is a (typically infinite) finitely generated group and $S$ is a finite symmetric set of generators for it not containing the identity. When there is no confusion, we will sometimes refer to a marked group by its underlying group. Consider the group action $\Gamma \curvearrowright 2^\Gamma$ given by 
\begin{equation}\label{eq:shift}
    (g \cdot x)(h) = x(g\inv h)
\end{equation}
for $g,h \in \Gamma$ and $x \in 2^\Gamma$. This is called the \textit{left shift action}. When $2^\Gamma$ is given the product topology, this action is clearly continuous. Let \begin{equation}
    F(2^\Gamma) = \{x \in 2^\Gamma \mid  \forall g \in \Gamma-\{\id\}, g \cdot x \neq x \ \}.
\end{equation}
This is a $G_\delta$ subspace of $2^\Gamma$, hence a Polish space. We can therefore form a Borel graph on $F(2^\Gamma)$ by putting an edge between $x$ and $y$ exactly when $s \cdot x = y$ for some $s \in S$. This is called the \textit{shift graph} of $(\Gamma,S)$. We will always refer to the shift graph by its underlying set, $F(2^\Gamma)$. The graphs $G_k$ will all have the form $F(2^{\Gamma_k})$ for some marked group $\Gamma_k$.

Let $\cay(\Gamma)$ be the \textit{Cayley graph} of $(\Gamma,S)$. This is the graph on $\Gamma$ given by putting an edge between group elements $g$ and $h$ exactly when $sg=h$ for some $s \in S$. Clearly, as a (discrete) graph, $F(2^\Gamma)$ is isomorphic to a disjoint union of continuum many (if $\Gamma$ is infinite) copies of $2^\Gamma$. It is therefore natural to expect to get some information on the descriptive combinatorics of $F(2^\Gamma)$ from the graph $\cay(\Gamma)$. However, in \cite{FW} (Theorem 1), Weilacher showed that $\cay(\Gamma)$ is not enough to determine $\chi_B(F(2^\Gamma))$ or $\chi_{BM}(F(2^\Gamma))$:
\begin{theorem}\label{th:fw}
Let $k \geq 3$. There are marked groups $\Gamma$ and $\Delta$ with isomorphic Cayley graphs for which $\chi_B(F(2^\Delta))=\chi_{BM}(F(2^\Delta)) = k$ but $\chi_B(F(2^\Gamma))=\chi_{BM}(F(2^\Gamma)) = k+1$.
\end{theorem}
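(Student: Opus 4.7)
The plan is to realize the same regular tree as the Cayley graph of two marked groups with different torsion profiles, engineered so that one admits a Borel coloring with one fewer color than the other. A natural family comes from free products: for any $k \geq 3$, the $k$-regular tree is the Cayley graph of many marked groups of the form $\Gamma_1 * \cdots * \Gamma_r$, where each $\Gamma_i$ is either a copy of $\Z$ (contributing two generators) or a copy of $\Z/2$ (contributing one), as long as the total number of generators is $k$. I would choose $\Gamma$ to be such a free product containing at least one $\Z$ factor (so that its shift action restricted to Cayley edges contains a free $\Z$-action along one coordinate), and $\Delta = (\Z/2)^{*k}$, for which all generators are involutive. Then $\cay(\Gamma) \cong \cay(\Delta)$ as abstract graphs, both being the $k$-regular tree.

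The lower bound $\chi_B(F(2^\Gamma)) \geq k+1$, and its $\chi_{BM}$ counterpart, would follow by a Marks-style Borel game argument from \cite{M15}: given a hypothetical Borel $k$-coloring of $F(2^\Gamma)$, one sets up a two-player Borel game in which Player I plays colors and Player II plays moves along the infinite-order generator, with Player II winning if the resulting path yields a monochromatic edge; Borel determinacy (and Banach-Mazur determinacy for the Baire-measurable version) produces a winning strategy that extracts such a monochromatic edge, contradicting the existence of the coloring. The matching upper bound $\chi_B(F(2^\Gamma)) \leq k+1$ is automatic from the Kechris-Solecki-Todorcevic bound since the graph is $k$-regular. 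For $\Delta$, the upper bound $\chi_B(F(2^\Delta)) \leq k$ is obtained by an explicit Borel coloring exploiting the involutive structure: each generator $s_i$ partitions $F(2^\Delta)$ into $2$-element orbits $\{x, s_i \cdot x\}$, and a Borel selector (for example, the lexicographically smaller element in the product order on $2^\Delta$) picks one representative per orbit; a combinatorial scheme adapted to the Bass-Serre tree of $\Delta$ then combines the $k$ per-generator selectors into a single $k$-coloring. The matching lower bound $\chi_B(F(2^\Delta)) \geq k$ is delicate, since the ordinary chromatic number of $F(2^\Delta)$ is only $2$ (the Cayley graph is a tree); it would follow from a refined Marks-style game that still runs when all generators are involutions but yields only $k$ rather than $k+1$, for instance by a game on longer paths in the Cayley graph made from products $s_i s_j$.

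The main obstacle is the upper-bound construction for $\Delta$: naively combining $k$ per-generator Borel selectors produces $2^k$ colors, and reducing to exactly $k$ colors requires careful alignment of local choices along the tree. A natural approach is induction on word length in the free-product structure, propagating the coloring layer by layer through the Bass-Serre tree of $\Delta$ in a Borel fashion, possibly after first passing to a Borel linear ordering on $F(2^\Delta)$ to break ties. A secondary but important technical point is verifying that each lower-bound argument runs in the Baire-measurable category, which is standard via the Banach-Mazur game and the Kuratowski-Ulam theorem; this is what lets one conclude the sharper equalities $\chi_{BM} = \chi_B$ on both sides.
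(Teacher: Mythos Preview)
This theorem is not proved in the present paper at all: it is quoted from \cite{FW}, and Section~\ref{sec:main} instead establishes the stronger Corollary~\ref{cor:difference} via the groups $\Gamma_k$ and $\Delta_k$. The construction in \cite{FW} is the visible prototype for Section~\ref{sec:main}: one takes a direct product and a twisted (semidirect) product of a finite group with $\Z$, chooses the generating sets so that the two Cayley graphs coincide and already have chromatic number $k$, and extracts the lower bound for the twisted version from an embedded copy of the $\Z$-shift, exactly as in the proof of $\chi_{BM}(F(2^{\Gamma_k}))=2k-1$ here.

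Your free-product approach cannot work, and the obstruction is structural rather than technical. Both of your candidate groups have the $k$-regular tree as Cayley graph, so $\chi(F(2^\Gamma)) = \chi(F(2^\Delta)) = 2$. Theorem~\ref{th:cm} then forces $\chi_{BM}(F(2^\Gamma)) \leq 2\cdot 2 - 1 = 3$, making $\chi_{BM}(F(2^\Gamma)) = k+1$ impossible for every $k \geq 3$. This already kills the Baire measurable half of the statement outright, and it explains why the actual construction must start from Cayley graphs with chromatic number $k$ rather than from trees.

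The Borel half fails as well, for the complementary reason. Your target upper bound $\chi_B(F(2^\Delta)) \leq k$ for $\Delta = (\Z/2)^{*k}$ is exactly what Marks rules out in \cite{M15}: his determinacy argument shows that the shift graph of a free product of $k$ nontrivial factors, with edges given by the factor subgroups, admits no Borel $k$-coloring, and when every factor is $\Z/2$ that graph coincides with the Cayley graph. Hence $\chi_B(F(2^{(\Z/2)^{*k}})) = k+1$, the same value you want for $\Gamma$, and there is no gap to exhibit. The step you flagged as the ``main obstacle'' is not a missing construction but a theorem standing in the way.
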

This led to the natural question:
\begin{prob}\label{prob:difference}
Are there marked groups $\Gamma$ and $\Delta$ with isomorphic Cayley graphs for which $\chi_B(F(2^\Gamma)) - \chi_B(F(2^\Delta)) > 1$? What about for Baire measurable chromatic numbers?
\end{prob}
We answer this as well by producing for each $k$ a marked group $\Delta_k$ whose Cayley graph is isomorphic to that of $\Gamma_k$, but for which $\chi_B(F(2^{\Delta_k})) = \chi_{BM}(F(2^{\Delta_k})) = k+1$. Thus we get
\begin{corollary}\label{cor:difference}
Let $k$ be a natural number. There are marked groups $\Gamma$ and $\Delta$ with isomorphic Cayley graphs but for which $\chi_B(F(2^\Gamma)) - \chi_B(F(2^\Delta)) = \chi_{BM}(F(2^\Gamma)) - \chi_{BM}(F(2^\Delta)) = k$. 
\end{corollary}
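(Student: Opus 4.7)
The plan is to combine Theorem~\ref{th:main} with the companion construction alluded to just above the statement of the corollary: for each $m \geq 3$, one produces a marked group $\Delta_m$ whose Cayley graph is isomorphic to that of $\Gamma_m$ but whose shift graph satisfies $\chi_B(F(2^{\Delta_m})) = \chi_{BM}(F(2^{\Delta_m})) = m+1$. Granting this companion construction, the corollary reduces to a brief arithmetic calculation.

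Concretely, given a natural number $k$, I would argue as follows. If $k = 0$, take $\Gamma = \Delta$ to be any marked group. If $k \geq 1$, set $m := k+2 \geq 3$ and take $\Gamma := \Gamma_m$ and $\Delta := \Delta_m$. Then by construction $\cay(\Gamma) \cong \cay(\Delta)$, and
$$\chi_B(F(2^\Gamma)) - \chi_B(F(2^\Delta)) = (2m-1) - (m+1) = m - 2 = k,$$
with the identical computation for $\chi_{BM}$. Both equalities required by the corollary follow immediately.

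The real mathematical content, and thus the main obstacle, is not in this deduction but in the construction and analysis of $\Delta_m$ itself. One must design a marked group locally indistinguishable from $\Gamma_m$ --- so that every Cayley-graph invariant, in particular $\chi(\cay)$, coincides between them --- whose shift graph nonetheless admits a significantly more efficient Borel coloring. I would expect the construction to refine the strategy used in Theorem~\ref{th:fw} of \cite{FW}, where a Cayley-graph twin was produced with shift chromatic numbers differing by one; here the task is to calibrate $\Delta_m$ so that $\chi_B(F(2^{\Delta_m}))$ drops all the way down to $m+1$, the smallest value compatible with $\chi(\cay(\Delta_m)) = m$. The upper bound $\chi_B(F(2^{\Delta_m})) \leq m+1$ would come from exhibiting an explicit Borel $(m+1)$-coloring exploiting the chosen group-theoretic structure of $\Delta_m$, while the matching lower bound $\chi_{BM}(F(2^{\Delta_m})) \geq m+1$ would come from a standard Baire category obstruction forcing one extra color beyond the purely combinatorial chromatic number, as is typical for free shift actions of infinite marked groups. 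Once $\Delta_m$ is constructed and its chromatic numbers are pinned down, the corollary as stated is automatic.
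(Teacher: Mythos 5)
Your proposal is correct and matches the paper's own derivation: the corollary is obtained exactly by pairing $\Gamma_m$ and $\Delta_m$ (with $\chi_B(F(2^{\Gamma_m}))=\chi_{BM}(F(2^{\Gamma_m}))=2m-1$ from Theorem~\ref{th:main} and $\chi_B(F(2^{\Delta_m}))=\chi_{BM}(F(2^{\Delta_m}))=m+1$ from Proposition~\ref{prop:delta}) and choosing $m=k+2$ so the difference is $m-2=k$, with the degenerate $k=0$ case handled trivially. Your anticipated outline for establishing the chromatic numbers of $\Delta_m$ (explicit Borel $(m+1)$-coloring for the upper bound, a Baire category obstruction for the lower bound) is also how the paper proceeds in Proposition~\ref{prop:delta}.
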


In Section 3 we define the marked groups $\Delta_k$ and $\Gamma_k$ and compute their various chromatic numbers. In Section 4 we note that everything said in this paper about Baire measurable chromatic numbers can also be said about measure chromatic numbers in the hyperfinite setting.


\section{Graphs Whose Connected Components All Have Two Ends}\label{sec:2ends}


In this section we prove Theorem \ref{th:2ends}. The proof uses little more than some results of Miller from \cite{M09}, but nevertheless the result seems to be new, and may be of interest to some.

Let $G$ be a graph on a set $X$. If $A \subset X$, we denote by $G \res A$ the graph $G \cap (A \times A)$ on $A$. We call $G$ \textit{connected} if it has one connected component, and $A$ \textit{connected} if $G \res A$ is connected. 

A \textit{path} between vertices $x$ and $y$ is a finite sequence $x=x_0,\ldots,x_n=y$ such that $(x_i,x_{i+1}) \in G$ for all $i$. In this situation $n$ is called the \textit{length} of the path. Note that a graph is connected if and only if there is a path between any two of its vertices. The \textit{path distance} between $x$ and $y$ is the smallest $n$ such that there is a path of length $n$ between $x$ and $y$, or $\infty$ if there is no path between $x$ and $y$. The \textit{path distance} between two sets of vertices $A$ and $B$ is the smallest path distance between any pair of vertices $x \in A$ and $y \in B$. A graph is called acyclic if it admits no paths as above with $x_0 = x_n$, but no other repeats among the $x_i$'s. 

Now assume $G$ is connected and locally finite. We say a subset $F \subset X$ \textit{divides $G$ into $n$ parts} if $G \res (X - F)$ has $n$ infinite connected components. We say $G$ \textit{has $n$ ends} if there is a finite set $F$ dividing $G$ into $n$ parts, but no such $F$ dividing $G$ into $m$ parts for any $m > n$. Note that if $G$ has $n$ ends, we can finite a finite set $F$ dividing it into $n$ parts such that $F$ is furthermore connected. It should be noted this definition is different in general from the one used in \cite{M09}, but is equivalent in the locally finite case.

Now let $G$ be a localy finite Borel graph on a space $X$ whose connected components all have two ends. Denote by $[G]^{<\infty}$ the standard Borel space of finite connected subsets of $X$. Let $\Phi \subset [G]^{<\infty}$ be the set of sets which divide their connected component into two parts. Miller (Lemma 5.3) proves that there is a maximal Borel set $\Psi' \subset \Phi$ whose members are pairwise disjoint. An easy modification of their proof shows that we can instead get a maximal Borel set $\Psi \subset \Phi$ such that the path distance between any two distinct members of $\Psi$ is at least 4. Fix such a $\Psi$.

Let $\T$ be the set of pairs $(S,T)$ with $S,T \in \Psi$ such that $S \neq T$ and there is a path from $S$ to $T$ which avoids all other points of $\bigcup \Psi$. Miller proves that $\T$ is an acyclic graph on $\Psi$, that $S$ and $T$ are connected in this graph if and only if they are subsets of the same connected component of $G$, and that every element of $\Psi$ is $\T$-adjacent to at most two other elements (Lemma 5.5). (Strictly speaking, they prove these things for $\Psi'$, but the proofs clearly still apply to $\Psi$.)

\begin{lemma}\label{lem:maximal}
Every $S \in \Psi$ is $\T$-adjacent to exactly two other elements.
\end{lemma}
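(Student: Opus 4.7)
The plan is a proof by contradiction, leveraging maximality of $\Psi$. Suppose some $S \in \Psi$ has at most one $\T$-neighbor, and let $C$ be its connected component of $G$. Since $S \in \Phi$, the set $C \setminus S$ has two infinite connected components $A$ and $B$. First I would note that every $T \in (\Psi \cap C) \setminus \{S\}$ lies entirely in $A$ or entirely in $B$: such a $T$ is connected and disjoint from $S$ (being at $G$-distance $\geq 4$), so it lies in a single component of $C \setminus S$, and that component must be one of $A, B$, since if $T$ lay in a finite component of $C \setminus S$ then $C \setminus T$ would have $S \cup A \cup B$ in one infinite component, contradicting $T \in \Phi$.

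Second, I would show that if $\Psi_A := \{T \in \Psi : T \subset A\}$ is nonempty, then $S$ is $\T$-adjacent to some element of $\Psi_A$. Pick $T_0 \in \Psi_A$ minimizing the $G$-distance from $S$, and take a shortest $G$-path from $S$ to $T_0$. Minimality forces this path to meet $S$ and $T_0$ only at its endpoints; since the only way to pass between $A$ and $B$ is through $S$, the interior vertices lie in the component of $C \setminus S$ adjacent to $T_0$, which is $A$. Any interior vertex belonging to some third $T' \in \Psi$ would force $T' \subset A$ by connectedness, and then $T'$ would be strictly closer to $S$ than $T_0$, contradicting minimality. So this path witnesses $(S, T_0) \in \T$, and the symmetric argument gives a $\T$-neighbor in $\Psi_B$ when $\Psi_B$ is nonempty. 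Under the contradiction hypothesis, therefore, one of $\Psi_A, \Psi_B$ must be empty; without loss of generality $\Psi_A = \emptyset$.

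The main step is then to construct a finite connected $T^\star \subset A$ at $G$-distance $\geq 4$ from $S$ with $T^\star \in \Phi$, contradicting maximality. Fix $R \geq 4$. The set $F_R := S \cup \{x \in A : d(x,S) \leq R\}$ is finite by local finiteness and connected, since each such $x$ has a geodesic to $S$ inside $F_R$. Since $C$ has two ends and $C \setminus F_R$ contains $B$ as one infinite component, two-endedness forces $A_{>R} := \{x \in A : d(x,S) > R\}$ to have exactly one infinite connected component, which I call $A^\star$. The vertex boundary $\partial A^\star$ of $A^\star$ in $C$ is then a finite subset of $\{x \in A : d(x,S) \geq R\}$: each of its elements is a neighbor of some vertex at distance $> R$ from $S$, so at distance $\geq R$ itself, must lie in $A$ (since $A^\star$ has no neighbors in $S$ or $B$), and must lie either in the distance-$R$ sphere $\{x \in A : d(x,S) = R\}$ or in $A_{>R} \setminus A^\star$, both finite. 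I then enlarge $\partial A^\star$ to a connected set $T^\star$ by choosing, for each connected component of $\partial A^\star$, a vertex in $A^\star$ adjacent to it and joining all these chosen vertices together by finite paths inside the connected set $A^\star$. The union is finite, connected, and contained in $\{x \in A : d(x,S) \geq R\}$. Since $T^\star \supset \partial A^\star$, deleting $T^\star$ severs $A^\star \setminus T^\star$ (still infinite) from the rest of $C$; together with the $S$-containing infinite component, this gives two infinite components of $C \setminus T^\star$, and two-endedness of $C$ caps the total at two, so $T^\star \in \Phi$.

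Finally, $d(T^\star, S) \geq R \geq 4$ by construction; for any $T' \in \Psi_B$, every path from $T^\star$ to $T'$ must cross $S$, giving $d(T^\star, T') \geq d(T^\star,S) + d(S,T') \geq 8$; and elements of $\Psi$ in other connected components of $G$ are at infinite distance from $T^\star$. Hence $\Psi \cup \{T^\star\}$ is a strictly larger subfamily of $\Phi$ with all pairwise distances $\geq 4$, contradicting maximality of $\Psi$. The main obstacle is the construction of $T^\star$: finding a finite connected separator of $C$ lying wholly inside $A$ and arbitrarily far from $S$. The intuition is that $A$ inherits one end of $C$, so the ``tail'' $A^\star$ of $A$ past distance $R$ is essentially a single infinite piece bounded by a finite set that can be thickened into a connected separator using the connectivity of $A^\star$ itself.
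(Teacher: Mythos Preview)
Your proof is correct and follows essentially the same strategy as the paper's: both argue that one infinite side of $C \setminus S$ contains no members of $\Psi$, and then build a finite connected separator at distance $\geq 4$ inside that side to contradict maximality of $\Psi$. The differences are cosmetic---you prove the empty-side claim directly where the paper cites Miller's $\T$-connectedness, and your separator is built from the outer boundary of $A^\star$ where the paper uses a trimmed distance-$4$ sphere---though note that two-endedness alone only gives \emph{at most} one infinite component of $A_{>R}$; for existence you need K\"onig's lemma, exactly as in the paper's argument.
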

\begin{proof}
Suppose some $S \in \Psi$ has fewer than two $\T$-neighbors. Let $C$ be the connected component of $S$. Let $C_-$ and $C_+$ be the two infinite connected components of $G \res (C - S)$. WLOG, $C_+$ must contain no sets in $\Psi$. This follows from the fact that any $T \in \Psi$ with $T \subset C$ must be $\T$-connected to $S$. 

Let $N$ be the set of points in $C_+$ whose path distance from $S$ is exactly 4. $N$ is finite since $G$ is locally finite. We claim $N$ divides $C$ into 2 parts: By K{\"o}nig's Lemma, we can find an injective sequence $\{x_n \mid n \in \omega\}$ of points in $C_+$ such that $(x_n,x_{n+1}) \in G$ for all $n$. Since $G$ is locally finite, there must be some $M$ for which for all $n \geq M$, the path distance between $x_n$ and $S$ is at least 5. Then the sequence $\{x_n \mid n \geq M\}$ does not pass through $N$, so it is contained in an infinite connected component of $G \res (C - N)$. Also, $C_{-}$ is contained in an infinite connected component of $G \res (C - N)$, so it suffices to show there is no path from $C_-$ to $x_M$ avoiding $N$. This is clear, though, as any path from $C_-$ to $x_M$ must pass through $S$, say at the point $y$, since $x_M \in C_+$. Then since the path distance from $S$ to $x_M$ is greater than 4, there must be some point in $C_+$ along our path from $y$ to $x_M$ whose path distance from $S$ is exactly 4.

Let $D$ be the infinite connected component of $G \res (C - N)$ not containing $S$. Let $N' \subset N$ be the set of elements of $N$ adjacent to a point in $D$. Then $N'$ still divides $C$ into 2 parts. Furthermore, we can find a finite subset $A \subset D$ such that $N' \cup A$ is connected. Then $N' \cup A \in \Phi$, and furthermore since every point in $D$ has path distance at least 5 from $S$, the path distance between $S$ and $N' \cup A$ is 4. However, since we assumed $C^+$ contains no sets in $\Psi$, this contradicts the maximality of $\Psi$.
\end{proof}

\begin{lemma}\label{lem:divided}
Every connected component of $G \res (X - \bigcup \Psi)$ is finite.
\end{lemma}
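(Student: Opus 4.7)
The plan is to assume for contradiction that some connected component $K$ of $G \res (X - \bigcup \Psi)$ is infinite, and work inside the $G$-component $C$ containing $K$. First, maximality of $\Psi$ forces $C$ to contain elements of $\Psi$, for otherwise any $F \in \Phi$ with $F \subseteq C$ could be appended to $\Psi$ without shortening any pairwise distances. By Lemma \ref{lem:maximal} together with Miller's acyclicity result for $\T$, the restriction of $\T$ to $\{S \in \Psi : S \subseteq C\}$ is connected, acyclic, and $2$-regular, hence a bi-infinite line. One checks (using maximality of $\Psi$ applied to each side) that the two $\T$-neighbors of any $S \in \Psi$ lie in different infinite components of $C - S$. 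I therefore enumerate $\{S \in \Psi : S \subseteq C\} = \{S_i : i \in \Z\}$ with $S_i$ $\T$-adjacent to $S_{i\pm 1}$, and label the two infinite components of $C - S_i$ as $L_{S_i}, R_{S_i}$ so that $S_j \subseteq L_{S_i}$ for $j < i$ and $S_j \subseteq R_{S_i}$ for $j > i$.

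Since $K$ is infinite, connected, and locally finite, K\"onig's lemma gives an infinite ray $r \subseteq K$. As $C$ has exactly two ends, $r$ determines one of them; by relabeling the spine I may assume this is the ``right'' end, meaning that for every $i$ the tail of $r$ eventually lies in $R_{S_i}$. For each $i$, $K$ avoids $S_i$ and is connected, so $K$ lies in a single component of $C - S_i$; since $K$ meets $R_{S_i}$ via the tail of $r$, that component must be $R_{S_i}$. Hence $K \subseteq \bigcap_{i \in \Z} R_{S_i}$. (In particular, $K$ cannot contain a ray going to the other end, since that would force $K$ to also meet $L_{S_i}$.)

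It suffices now to show $\bigcap_{i \in \Z} R_{S_i} = \0$. Given $x$ in this intersection, set $D = d_G(x, S_0) < \infty$. Because the $S_i$ are pairwise disjoint nonempty subsets of $C$ and $G$ is locally finite, the ball of radius $D$ around $S_0$ is finite and therefore contains only finitely many $S_i$, giving $d_G(S_0, S_i) > D$ for some $i$. Then $x \in R_{S_i}$ while $S_0 \subseteq L_{S_i}$, so every path in $C$ from $x$ to $S_0$ must cross $S_i$; any such path has length at least $d_G(S_0, S_i) > D$, contradicting the existence of a path of length $D$. The main delicacy is in the first paragraph---arranging the labels $L_{S_i}, R_{S_i}$ to track the two ends of $C$ uniformly across the whole spine---while the remaining steps follow directly from local finiteness and the definition of an end.
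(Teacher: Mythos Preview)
Your argument is correct and reaches the conclusion by a route genuinely different from the paper's, though the two share the same setup: both enumerate the elements of $\Psi$ inside the $G$-component $C$ as a bi-infinite $\T$-line $\{S_i : i \in \Z\}$ with consistently labeled sides. From there the paper takes $x \in D$, locates the $S_n$ at minimal path distance from $x$, argues that $D \subset C_{n,+} \cap C_{n+1,-}$, and observes that if $D$ were infinite then the finite set $S_n \cup S_{n+1}$ would divide $C$ into at least three infinite pieces, contradicting two-endedness directly via the ``finite set dividing into parts'' definition. You instead extract a ray $r \subset K$ via K\"onig's lemma, invoke two-endedness in its ray formulation to force $K \subset \bigcap_i R_{S_i}$, and finish with a clean distance argument showing this intersection is empty. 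The paper's path is more elementary and self-contained; yours is more structural, but note that your key assertion ``for every $i$ the tail of $r$ eventually lies in $R_{S_i}$'' still needs the three-pieces observation under the hood: monotonicity of the labels only shows that the set of $i$ with $r \subset R_{S_i}$ is downward closed, and to rule out the remaining case $r \subset R_{S_n} \cap L_{S_{n+1}}$ one must exhibit three infinite components of $C - (S_n \cup S_{n+1})$, exactly as the paper does. One small slip in your final paragraph: you need $i > 0$, not merely ``some $i$'', to conclude $S_0 \subset L_{S_i}$; this is harmless since infinitely many positive $i$ satisfy your distance inequality.
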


\begin{proof}
Let $x \in (X - \bigcup \Psi)$. Let $C$ be the connected component of $x$ in the graph $G$, and let $D$ be the connected component of $x$ in the graph $G \res (C - \bigcup \Psi)$. We want to show $D$ is finite.

By maximality there is some element of $\Psi$ contained in $C$. Then, by Lemma \ref{lem:maximal} along with the fact that $\T$ is acyclic, we can label the elements of $\Psi$ contained in $C$ as $\{S_n \mid n \in \Z\}$, where the indices are chosen such that $(S_n,S_m) \in \T$ if and only if $|n - m| = 1$. By definition of $\Phi$, for each $n$ the graph $G \res (C - S_n)$ has two infinite connected components, call them $C_{n,-}$ and $C_{n,+}$. By definition of $\T$, the sets $S_m$ for $m > n$ must all lie in the same connected component of $G \res (C- S_n)$, and likewise for the sets $S_m$ for $m < n$. Therefore, by relabelling if necessary, we can assume $S_m \subset C_{n,+}$ for all $m > n$ and $S_m \subset C_{n,-}$ for all $m < n$.

Now, suppose $D$ is infinite. Then, for each $n$, either $D \subset C_{n,+}$ or $D \subset C_{n,-}$. Consider integers $n$, points $y \in S_n$, and paths from $x$ to $y$. Choose $n,y$, and such a path such that this path is of minimal length among all such choices. Then this path cannot pass through any sets $S_m$ for $m \neq n$. WLOG assume $D \subset C_{n,+}$. We claim $D \subset C_{n+1,-}$. If not $D \subset C_{n+1,+}$, but then $D$ and $S_n$ are in different connected components of $G \res (C - S_{n+1})$, so there can be no path from $x$ to $S_n$ avoiding $S_{n+1}$, a contradiction. Therefore $D \subset C_{n,+} \cap C_{n+1,-}$, so this intersection is infinite. This implies, however, that the finite set $S_n \cup S_{n+1}$ divides $G \res C$ into at least three parts, a contradiction.
\end{proof}

We can now prove Theorem \ref{th:2ends}:

\begin{proof}

For each $S \in \Psi$, let $S^* = S \cup \{x \in X \mid \exists y \in S \ (x,y) \in G\}$. Since $G$ is locally finite and each $S$ is finite, each $S^*$ is finite. Let $B^* = \bigcup_{S \in \Psi} S^*$. $B^*$ is Borel since $\Psi$ is Borel. Since distinct $S$'s had path distances of at least 4 between them, distinct $S^*$'s have path distances of at least 2 between them. Thus, every connected component of $G \res B^*$ is a subset of some $S^*$. In particular these connected components are all finite. Therefore, by the Lusin-Novikov Uniformization Theorem (See \cite{K95}, Lemma 18.12), there is a Borel $\chi(G)$-coloring, say $c_1^*:B^* \rightarrow \{1,2,\ldots,\chi(G)\}$ of $G \res B^*$. Let $B = B^* - c_1^{*-1 }(\{\chi(G)\})$ and $c_1 = c_1^* \res B$. Then $B$ is Borel and $c_1$ is a Borel $(\chi(G)-1)$-coloring of $G \res B$.

We claim that the connected components of $G \res (X - B)$ are also all finite. Suppose to the contrary that $D \subset (X-B)$ is some infinite connected component. Let $C$ be the connected component of $G$ containing $D$. We first claim that $D$ must contain infinitely many points not in $B^*$. If not, then $D$ contains infinitely points from $(B^* - B)$, and only finitely many not in $B^*$. By construction, though, $B^* - B$ is independent, so since $D$ is connected, for every $y \in (B^*-B) \cap D$, there must be some $x \in D - B^*$ with $(x,y) \in G$. Thus there is some $x \in D - B^*$ connected to infinitely many such $y$'s, contradicting local finiteness. Therefore, by Lemma \ref{lem:divided}, there are $x,y \in D - B^*$ such that $x$ and $y$ are in different connected components of $G \res (C - \bigcup \Psi)$. Let $x=x_0,x_1,\ldots,x_n=y$ be a path from $x$ to $y$ consisting of points in $D$. Then there must be some $S \in \Psi$ and some $0 < i < n$ such that $x_i \in S$. Then $x_{i-1},x_{i}$, and $x_{i+1}$ are all in $S^*$. Since there are some edges between them, they can't all be assigned the color $\chi(G)$ by $c_1^*$, but this means at least one of them is in $B$, a contradiction.

Therefore, again by the Lusin-Novikov Uniformization Theorem, there is a Borel $\chi(G)$, coloring, say, $c_2:(X-B) \rightarrow \{\chi(G),\ldots,2\chi(G)-1\}$, of $G \res (X - B)$. Since $c_1$ and $c_2$ use disjoint sets of colors, $c_1 \cup c_2$ is a Borel $(2\chi(G)-1)$-coloring of $G$.
\end{proof}


\section{The Construction}\label{sec:main}


Fix $k \geq 3$. In this section, we define the marked groups $\Gamma_k$ and $\Delta_k$ promised in Section \ref{sec:intro}.

We start with a finite marked group: Let $Z_k$ denote the cyclic group of order $k$, which we will identify with the integers modulo $k$. Consider the group $Z_k \times Z_k$ with generating set $S = \{(a,b) \mid 0 < a,b < n \}$. Let $H$ be the Cayley graph of this finite marked group. We'll think of the vertices of $H$ as sitting on a $k$ by $k$ grid, with the horizontal axis corresponding to the first coordinate and the vertical to the second. Accordingly, by a $\textit{row}$ of $H$ we mean a set of the form $\{(a,b) \mid a \in Z_k\}$ for some fixed $b \in Z_k$, and by a $\textit{column}$ of $H$ we mean a set of the form $\{(a,b) \mid b \in Z_k\}$ for some fixed $a \in Z_k$.

\begin{figure}
\centering
\includegraphics[width=0.5\textwidth]{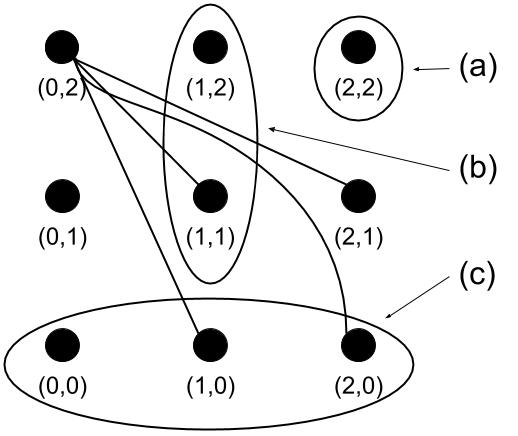}
\caption{\label{fig:hvz} A drawing of the graph $H$ for $k=3$. The edges shown are exactly those meeting $(0,2)$. The three types of independent sets are shown in circles and labeled: cardinality one (a), vertical (b), and horizontal (c). }
\end{figure}

An \textit{independent} subset of a graph is a pairwise-non-adjacent set of vertices. Thus, a coloring is just a partition of the set of vertices into independent sets. Note that any independent subset of $H$ of size greater than one must be either completely contained in some row, or completely contained in some column, (and not both). Call such sets \textit{horizontal} and \textit{vertical}, respectively (See Figure \ref{fig:hvz}).

\begin{lemma}\label{lem:orientation}
Let $c:Z_k \times Z_k \rightarrow \{1,2,\ldots,2k-2\}$ be a $(2k-2)$-coloring of $H$. Exactly one of the following holds:
\begin{itemize}
    \item Every row contains a horizontal color set.
    \item Every column contains a vertical color set.
\end{itemize}
\end{lemma}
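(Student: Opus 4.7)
The plan is to address the two halves of ``exactly one'' separately: the incompatibility (at most one can hold) follows from a quick color-counting argument, while at-least-one-ness is forced by producing a monochromatic edge via pigeonhole.

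For the exclusivity, note that a horizontal color set is contained in exactly one row and a vertical color set in exactly one column, and that these families are disjoint since both types require size $\geq 2$ and an independent set of size $\geq 2$ cannot be both horizontal and vertical. Hence if every row housed a horizontal color set there would be at least $k$ horizontal color sets, and symmetrically at least $k$ vertical color sets if every column housed a vertical one. Together these would require at least $2k$ color classes, contradicting the bound of $2k-2$ color classes actually used by $c$.

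For the ``at least one'' direction, I would argue by contradiction: suppose neither bullet holds, so one can pick a row $R_{b_0}$ containing no horizontal color set and a column $C_{a_0}$ containing no vertical color set. Then $c \res R_{b_0}$ must be injective --- otherwise two equally-colored vertices in $R_{b_0}$ would sit in a common color class, which by the horizontal/vertical dichotomy for independent sets of size $\geq 2$ would be a horizontal color set contained in $R_{b_0}$. So $R_{b_0}$ uses exactly $k$ distinct colors, and symmetrically so does $C_{a_0}$. Since only $2k-2$ colors exist in total, the two palettes overlap in at least $k + k - (2k-2) = 2$ colors. One overlap is $c(a_0, b_0)$; pick any second shared color $j$. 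By injectivity on row and column, $j$ is realized at some $(a_1, b_0)$ with $a_1 \neq a_0$ and at some $(a_0, b_1)$ with $b_1 \neq b_0$. These two vertices differ in both coordinates, so by the description of the edges of $H$ they are adjacent --- yet they share the color $j$, contradicting that $c$ is a proper coloring.

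The step I expect to require real insight is spotting that ``injectivity on $R_{b_0}$'' combined with the $(2k-2)$-color pigeonhole forces a second shared color, which then immediately exhibits a monochromatic edge. The only thing to be careful about is the bookkeeping that ``horizontal'' and ``vertical'' color sets are required to have size at least $2$, so that singleton color classes do not interfere with either count.
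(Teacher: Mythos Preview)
Your proof is correct and follows essentially the same approach as the paper's. The exclusivity argument is identical, and for the ``at least one'' direction the paper likewise picks a row with no horizontal color set and a column with no vertical color set and then observes that $c$ must be injective on $R\cup C$, forcing $2k-1$ colors; your pigeonhole-on-palettes phrasing is just an explicit unpacking of why that injectivity holds.
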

\begin{proof}
Since there are $k$ rows and $k$ columns, for both to hold simultaneously would require $2k$ colors. Therefore at most one holds.

Suppose neither holds. Then there is some column $C$ and some row $R$ such that $C$ does not contain a horizontal color set and $R$ does not contain a vertical color set. Then every point in $R \cup C$ must have a different color, but $|R \cup C| = 2k-1$. Therefore at least one holds.
\end{proof}

We call $c$ as in the lemma a \textit{horizontal} coloring if the first condition holds, and a \textit{vertical} coloring if the second holds.

We can now define the marked group $\Delta_k$: It will be the group $(Z_k \times Z_k) \times \Z$, with generating set $S \times \{-1,0,1\}$. Let $G$ be the Cayley graph of $\Delta_k$. It's easy to see $\chi(G) = k$: A $k$-coloring is given by sending the element $((a,b),n)$ to $a$ for all $n \in \Z$ and $0 \leq a,b < k$. Also note that $G$ has two ends, as desired.

For each $n \in \Z$, the restriction of $G$ to the $(Z_k \times Z_k)$-orbit $(Z_k \times Z_k) \times \{n\}$ can be identified with $H$ in the obvious way. Thus, if $c:(Z_k \times Z_k) \times \Z \rightarrow \{1,2,\ldots,2k-2\}$ is a $(2k-2)$-coloring of $G$, the restriction of $c$ to the orbit $(Z_k \times Z_k) \times \{n\}$ is, for each $n$, either a horizontal coloring or a vertical coloring. In the $k$-coloring defined in the previous paragraph, all these restrictions were horizontal. The next lemma says that this was no accident:

\begin{figure}
\centering
\includegraphics[width=1\textwidth]{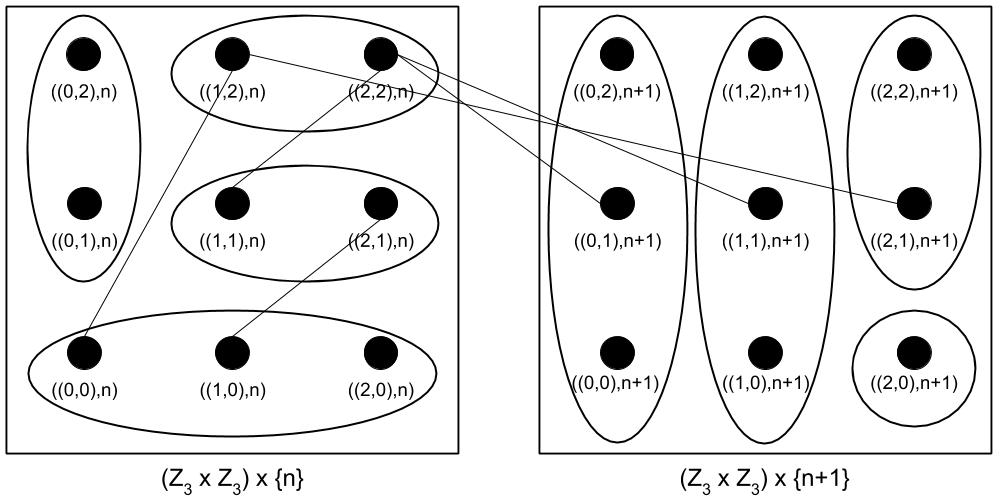}
\caption{\label{fig:invariance} A visual explanation of the proof of Lemma \ref{lem:invariance} in the case $k=3$. The two squares enclose neighboring $(Z_k \times Z_k)$-orbits. The cirlces represent color sets within each orbit. Most edges are omitted, but some are included to show any horizontal color set from the first orbit must admit an edge to every vertical color set from the second orbit. Others are included to show that horizontal color sets in different rows of a single orbit always have edges between them. The same is true for vertical color sets in different columns.}
\end{figure}

\begin{lemma}\label{lem:invariance}
Let $c:(Z_k \times Z_k) \times \Z \rightarrow \{1,2,\ldots,2k-2\}$ be a $(2k-2)$-coloring of $G$. Exactly one of the following holds:
\begin{itemize}
    \item The restriction of $c$ to every $(Z_k \times Z_k)$-orbit is horizontal.
    \item The restriction of $c$ to every $(Z_k \times Z_k)$-orbit is vertical.
\end{itemize}
\end{lemma}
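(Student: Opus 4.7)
The plan is to reduce the claim to a comparison of any two adjacent orbits. By Lemma \ref{lem:orientation}, the restriction of $c$ to each single $(Z_k \times Z_k)$-orbit is either horizontal or vertical (and not both), so it suffices to show that the restriction of $c$ to orbit $n$ and the restriction of $c$ to orbit $n+1$ have the same orientation for every $n \in \Z$. Then transitivity along the integers gives the lemma, and since a single orbit's restriction cannot be simultaneously horizontal and vertical, the ``exactly one'' clause is automatic.

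The key structural observation I will use is the adjacency criterion between consecutive orbits: the generating set $S \times \{-1,0,1\}$ places an edge between $((a_1,b_1),n)$ and $((a_2,b_2),n+1)$ if and only if $(a_2-a_1,b_2-b_1) \in S$, i.e.\ if and only if $a_1 \neq a_2$ and $b_1 \neq b_2$ in $Z_k$. From this I will extract the following subclaim, which is the heart of the argument and is foreshadowed by Figure \ref{fig:invariance}: if $A$ is a horizontal color set sitting in row $b$ of orbit $n$ and $B$ is a vertical color set sitting in column $a$ of orbit $n+1$, then some vertex of $A$ is $G$-adjacent to some vertex of $B$. The verification is immediate: since $|A| \geq 2$, pick $(a_1,b) \in A$ with $a_1 \neq a$; since $|B| \geq 2$, pick $(a,b_1) \in B$ with $b_1 \neq b$; then $a_1 \neq a$ and $b \neq b_1$, so the adjacency criterion gives the desired edge.

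Now I argue by contradiction: suppose the restriction to orbit $n$ is horizontal while the restriction to orbit $n+1$ is vertical. The horizontal hypothesis yields, for each of the $k$ rows of orbit $n$, a color class wholly contained in that row; since color classes in distinct rows are automatically disjoint and hence carry distinct colors, we obtain $k$ distinct colors used for horizontal color sets in orbit $n$. Symmetrically, orbit $n+1$ uses $k$ distinct colors for vertical color sets, one in each column. The subclaim forces these two families of colors to be disjoint as well: if a horizontal color set $A$ in orbit $n$ and a vertical color set $B$ in orbit $n+1$ shared a color, they would lie in a common independent set, contradicting the edge constructed above. Thus the coloring $c$ must use at least $2k$ colors in total, contradicting that it is a $(2k-2)$-coloring.

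The main obstacle is really just this subclaim, and the care needed is in verifying that the sizes $|A|,|B| \geq 2$ (built into the definitions of horizontal and vertical color set) give enough room to choose vertices with $a_1 \neq a$ and $b_1 \neq b$ simultaneously; given the adjacency criterion this is quick. Once the subclaim is in hand, the counting argument above completes the proof and induction on $n$ propagates the common orientation across all of $\Z$.
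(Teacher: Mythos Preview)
Your proof is correct and follows essentially the same route as the paper: reduce to two consecutive orbits, assume one is horizontal and the next vertical, exhibit $k$ horizontal color sets and $k$ vertical color sets, and show all $2k$ must receive distinct colors. The only cosmetic difference is that you justify distinctness of the $R_i$'s (and $C_j$'s) by noting that color classes in distinct rows are disjoint by definition, whereas the paper instead points to an edge between $R_i$ and $R_j$; both justifications are valid, and your explicit verification of the cross-orbit adjacency criterion is exactly what the paper's ``observe'' and Figure~\ref{fig:invariance} encode.
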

\begin{proof}
By symmetry, it suffices to show that if the restriction of $c$ to $(Z_k \times Z_k) \times \{n\}$ is horizontal, then so is the restriction to $(Z_k \times Z_k) \times \{n+1\}$. Suppose instead that it is vertical. For $1 \leq i \leq k$, let $R_i$ be a horizontal color set contained in the $i$-th row of $(Z_k \times Z_k) \times \{n\}$, and let $C_i$ be a vertical color set contained in the $i$-th column of $(Z_k \times Z_k) \times \{n+1\}$. Observe that for every $1 \leq i,j \leq k$, there is at least one edge between $R_i$ and $C_j$ (See Figure \ref{fig:invariance}). Furthermore, if $i \neq j$, there is at least one edge between $R_i$ and $R_j$, as well as between $C_i$ and $C_j$ (Again see Figure \ref{fig:invariance}). Therefore each $R_i$ and $C_j$ must have a distinct color, but this requires $2k$ colors.
\end{proof}

This leads us to a natural definition of the marked group $\Gamma_k$: Let $\varphi \in \textnormal{Aut}(Z_k \times Z_k)$ be the coordinate swapping map: $\varphi(a,b) = (b,a)$. $\Gamma_k$ will be the semi-direct product $(Z_k \times Z_k) \rtimes_{1 \mapsto \varphi} \Z$, again with generating set $S \times \{-1,0,1\}$. Observe that the following gives an isomorphism between the Cayley graphs of $\Gamma_k$ and $\Delta_k$:
\begin{equation}\label{eq:iso}
    ((a,b),n) \mapsto ((a,b),n) \textnormal{ for } n \textnormal{ even,     } ((a,b),n) \mapsto ((b,a),n) \textnormal{ for } n \textnormal{ odd},
\end{equation}
where $a,b \in Z_k$ and $n \in \Z$. Thus we still have $\chi(F(2^{\Gamma_k})) = \chi(\cay(\Gamma_k)) = k$, and this Cayley graph still  has two ends as desired. We now compute the Borel and Baire measurable chromatic numbers of $F(2^{\Gamma_k})$, proving Theorem \ref{th:main}:

\begin{prop}
$\chi_{B}(F(2^{\Gamma_k})) = \chi_{BM}(F(2^{\Gamma_k})) = 2k-1$.
\end{prop}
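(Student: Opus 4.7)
For the upper bound, since $\Gamma_k$ acts freely on $F(2^{\Gamma_k})$, every connected component of the shift graph is graph-isomorphic to $\cay(\Gamma_k)$, which has two ends as noted just after (\ref{eq:iso}). Combined with $\chi(F(2^{\Gamma_k})) = k$, Theorem \ref{th:2ends} yields $\chi_B(F(2^{\Gamma_k})) \leq 2k-1$, and hence the same bound for $\chi_{BM}$. It remains to prove $\chi_{BM}(F(2^{\Gamma_k})) \geq 2k-1$.

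Suppose for contradiction that $c : F(2^{\Gamma_k}) \to \{1,\ldots,2k-2\}$ is a Baire measurable coloring. For each $x \in F(2^{\Gamma_k})$, the map $g \mapsto c(g\cdot x)$ is a $(2k-2)$-coloring of $\cay(\Gamma_k)$; transporting along the isomorphism $\psi$ from (\ref{eq:iso}) gives a $(2k-2)$-coloring of $\cay(\Delta_k)$, to which Lemma \ref{lem:invariance} applies. Since $\psi$ restricts to the identity on $(Z_k\times Z_k)$-cosets at even $\Z$-level and to the coordinate swap $\varphi$ (which interchanges horizontal and vertical independent sets) on those at odd $\Z$-level, both alternatives of Lemma \ref{lem:invariance} pull back to the same $\Gamma_k$-side statement: along every $\Gamma_k$-orbit, the type of each $(Z_k\times Z_k)$-coset alternates with the parity of its $\Z$-coordinate. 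Writing $H = (Z_k\times Z_k) \rtimes 2\Z \leq \Gamma_k$, it follows that the set $A$ of $x$ whose $(Z_k\times Z_k)$-coset is of horizontal type, and its complement $B$, are both $H$-invariant, and every $g \in \Gamma_k \setminus H$ satisfies $g \cdot A = B$.

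The contradiction then comes from a Baire category argument. Membership in $A$ depends only on the $k^2$ values $c(h\cdot x)$ for $h \in Z_k\times Z_k$, so $A$ has the Baire property; by symmetry we may assume $A$ is non-meager, hence comeager in some non-empty basic clopen $[p] \cap F(2^{\Gamma_k})$, where $p$ is a partial function with finite domain $D \subset \Gamma_k$. Choosing $g = ((0,0), n) \in \Gamma_k \setminus H$ with $n$ odd and $|n|$ large enough that $gD \cap D = \emptyset$, the intersection $[p] \cap g\cdot [p]$ is a non-empty basic clopen subset of $2^{\Gamma_k}$ which meets $F(2^{\Gamma_k})$ in a non-meager set on which $A$ is comeager (inherited from $[p]$) and on which $B = g\cdot A$ is also comeager (inherited from $g\cdot [p]$); this contradicts $A \cap B = \emptyset$. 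The main technical step is the pullback of Lemma \ref{lem:invariance} through the twisted isomorphism $\psi$---one must verify that the coordinate swap on the odd $\Z$-levels genuinely converts horizontal color sets into vertical ones, producing the parity-alternation that lets near-transitivity of the shift action force a contradiction.
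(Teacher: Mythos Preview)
Your proof is correct and tracks the paper's argument through the main structural step: Theorem \ref{th:2ends} for the upper bound, and for the lower bound transporting Lemma \ref{lem:invariance} through the isomorphism (\ref{eq:iso}) to conclude that the horizontal/vertical type of the $(Z_k\times Z_k)$-orbit of $x$ must flip under the action of $((0,0),1)$. The only divergence is in how the contradiction is extracted at the end. The paper encodes the flip as a Baire measurable map $d:F(2^{\Gamma_k})\to\{1,2\}$ with $d(x)\neq d(((0,0),1)\cdot x)$, then pulls $d$ back along an explicit continuous $\Z$-equivariant embedding $F(2^\Z)\hookrightarrow F(2^{\Gamma_k})$ to produce a Baire measurable $2$-coloring of $F(2^\Z)$, contradicting $\chi_{BM}(F(2^\Z))=3$ from \cite{KST}. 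You instead run the Baire category argument directly on $F(2^{\Gamma_k})$, using that far shifts of a basic clopen set intersect it nontrivially; this is self-contained (no appeal to the $F(2^\Z)$ result) and in effect reproves the needed instance of that fact in place, while the paper's reduction is shorter and more modular.
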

\begin{proof}
Theorem \ref{th:2ends} gives us the upper bound $\chi_B(F(2^{\Gamma_k})) \leq 2k-1$, so it remains to show there is no Baire measurable $(2k-2)$-coloring of $F(2^{\Gamma_k})$.

Suppose first that $c:(Z_k \times Z_k) \rtimes_{1 \mapsto \varphi} \Z \rightarrow \{1,2,\ldots,2k-2\}$ is a $(2k-2)$-coloring of $\cay(\Gamma_k)$. Note that the isomorphism (3) sends $(Z_k \times Z_k)$-orbits to $(Z_k \times Z_k)$-orbits, but preserves the notions of ``horizontal'' and ``vertical'' for those with even $\Z$-coordinate and flips those notions for those with odd $\Z$-coordinate. Thus Lemma \ref{lem:invariance} has the following consequence for $\Gamma_k$: If for some $n$ the restriction of $c$ to $(Z_k \times Z_k) \times \{n\}$ is horizontal, the restiction to $(Z_k \times Z_k) \times \{n+1\}$ must be vertical, and vice versa.

Now suppose $c:F(2^{\Gamma_k}) \rightarrow \{1,2,\ldots,2k-2\}$ is a Baire measurable $(2k-2)$-coloring. Define the map $d:F(2^{\Gamma_k}) \rightarrow \{1,2\}$ by sending a point $x$ to 1 if the restriction of $c$ to the $(Z_k \times Z_k)$-orbit of $x$ is horizontal, and 2 if it is vertical. It is clear that $d$ is Baire measurable since $c$ was. By the previous paragraph, $d(x) \neq d(((0,0),1) \cdot x)$ for all $x$.

Now consider $\Z$ with generators $\{\pm 1\}$, and let $g:F(2^\Z) \rightarrow F(2^{\Gamma_k})$ be the map given by 
\begin{equation}
    g(y)((a,b),n) = \begin{cases} y(n) \textnormal{ if } (a,b)=(0,0) \\ 0 \textnormal{ else.}  \end{cases}
\end{equation}
Then $g$ is continuous, and $g(1 \cdot y) = ((0,0),1) \cdot g(y)$ for all $y$. Therefore $d \circ g$ is a Baire measurable 2-coloring of the shift graph $F(2^\Z)$. It was established in $\cite{KST}$, though, that $\chi_{BM}(F(2^\Z)) = 3$.
\end{proof}

Finally, we compute the Borel and Baire measurable chromatic numbers of $F(2^{\Delta_k})$, which gives Corollary \ref{cor:difference} as promised:

\begin{prop}\label{prop:delta}
$\chi_B(F(2^{\Delta_k})) = \chi_{BM}(F(2^{\Delta_k})) = k+1$.
\end{prop}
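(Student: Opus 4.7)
The plan is to establish $\chi_{BM}(F(2^{\Delta_k})) \geq k+1$ and $\chi_B(F(2^{\Delta_k})) \leq k+1$ separately.

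For the lower bound, I will adapt the argument from the previous proposition. Assume $c : F(2^{\Delta_k}) \to \{1, \ldots, k\}$ is a Baire measurable proper $k$-coloring. Since $k \leq 2k - 2$ (as $k \geq 3$), Lemma \ref{lem:invariance} applies, so on each $(Z_k \times Z_k)$-orbit of $F(2^{\Delta_k})$ the restriction of $c$ is horizontal or vertical. The key difference from the $\Gamma_k$ case is that for the direct product $\Delta_k$, Lemma \ref{lem:invariance} forces this character to be the same on every $(Z_k \times Z_k)$-orbit of a given connected component (whereas in $\Gamma_k$ the coordinate-swapping twist made it alternate). So the Baire measurable horizontal/vertical indicator $d : F(2^{\Delta_k}) \to \{H, V\}$ is $\Delta_k$-invariant, and by generic ergodicity of the $\Delta_k$-shift action I may assume $d \equiv H$ on a comeager $\Delta_k$-invariant set $A$. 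On $A$, $c$ depends only on the $b$-coordinate and is therefore invariant under the subgroup $\Lambda := Z_k \times \{0\} \times \Z \leq \Delta_k$. Since $\Lambda$ is infinite, its shift action on $F(2^{\Delta_k})$ is topologically transitive (any two basic opens can be linked by a suitable $\lambda$, using that generic points realize every finite pattern), hence generically ergodic; thus $c \res A$ will be essentially constant. Picking any generator $s \in S \times \{0\}$ with nonzero $b$-coordinate, say $s = ((1,1),0)$, a comeager subset of $A$ contains adjacent pairs $(x, s \cdot x)$ with equal colors, contradicting properness.

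For the upper bound, I will construct a Borel $(k+1)$-coloring. Noting that $K := Z_k \times Z_k \times \{0\}$ is a finite normal subgroup of $\Delta_k$ with $\Delta_k / K \cong \Z$, the $K$-orbit equivalence relation on $F(2^{\Delta_k})$ has finite classes and is therefore smooth; the quotient $F(2^{\Delta_k}) / K$ then carries a free Borel $\Z$-action, whose Cayley graph (with generators $\pm 1$) has Borel chromatic number at most $3$ by the standard bound for free $\Z$-actions. Pulling back yields a Borel $K$-invariant map $\bar\rho : F(2^{\Delta_k}) \to \{1, 2, 3\}$ proper on every edge between distinct $K$-orbits. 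Within each $K$-orbit (a copy of the Cayley graph $H$ of $Z_k \times Z_k$), I will apply a Borel horizontal $k$-coloring via the Borel selector afforded by smoothness of the $K$-orbit relation, and then combine with $\bar\rho$ using overlapping palettes arranged so that cross-$K$-orbit edges remain properly colored, reaching a total of only $k+1$ colors.

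The main obstacle lies in the upper bound, specifically in designing the palette overlap so that cross-$K$-orbit edges stay properly colored despite the fact that the per-$K$-orbit horizontal labelings need not be aligned in the $\Z$-direction of a connected component (no Borel transversal for the $\Delta_k$-action exists to enforce such alignment). This is precisely where the direct-product structure of $\Delta_k$ (versus the semi-direct $\Gamma_k$) must be exploited; the fact that Theorem \ref{th:2ends} gives only $2k-1$ indicates that a considerably more careful construction is required to attain the tight value $k+1$.
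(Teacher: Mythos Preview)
Your lower bound argument is essentially correct and close in spirit to the paper's, though you assert the crucial step without justification. Saying ``$c$ depends only on the $b$-coordinate'' is not immediate from Lemma~\ref{lem:invariance}: that lemma only tells you each $K$-orbit carries a horizontal $(2k-2)$-coloring. You need two further observations: (i) with exactly $k$ colors, a horizontal coloring of $H$ must make every row a full color class (a counting argument: $k$ colors, $k^2$ vertices, maximal independent sets have size $k$), and (ii) properness across adjacent $K$-orbits then forces the row labellings to coincide, giving invariance under $((0,0),1)$. The paper makes exactly these observations directly, then finishes with the standard meager/comeager dichotomy for each color class under the infinite cyclic element $((0,0),1)$, together with the fact that $(Z_k\times Z_k)$-translates of any one color class cover the space. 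Your route via generic ergodicity of $\Lambda = Z_k\times\{0\}\times\Z$ is fine once (i) and (ii) are in place.

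The real gap is the upper bound. You set up a Borel $3$-coloring $\bar\rho$ of the quotient $\Z$-graph and a per-$K$-orbit horizontal $k$-coloring via a Borel selector, and then propose to ``combine with $\bar\rho$ using overlapping palettes'' to reach $k+1$ colors --- but you do not say how, and you explicitly flag this as the main obstacle. It is a genuine one: a per-$K$-orbit horizontal $k$-coloring chosen from a selector will label rows by arbitrary bijections $\{0,\dots,k-1\}\to\{1,\dots,k\}$, and properness across a single $K$-edge already forces neighboring bijections to \emph{agree}, which a Borel selector cannot arrange globally. Shifting palettes by $\bar\rho$ does not repair this misalignment. The paper's construction is different in kind: one takes a Borel maximal $3k$-discrete set $A$, colors each $K$-orbit meeting $A$ in a fixed standard way, and then, between two consecutive $A$-points along a component (distance $>3k$ apart), performs $k$ explicit three-step ``swap'' moves using the spare color $k+1$ to rotate the row labelling from one standard chart to the other. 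Each swap occupies three consecutive $K$-orbits (first swap color $k$ out for $k+1$, then swap the target color in for $k$, then swap $k+1$ back), and after $k$ swaps the two endpoint colorings match, so the remaining orbits can be filled in by translation. This interpolation procedure is the missing idea in your proposal.
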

\begin{proof}
We first show there is no Baire measurable $k$-coloring $c:F(2^{\Delta_k}) \rightarrow \{1,2,\ldots,k\}$. Suppose we had such a coloring. Observe that all $k$-colorings of the Cayley graph of $\Delta_k$ look essentially like the one defined before Lemma \ref{lem:invariance}: Up to a relabeling of the colors, they either assign the color $a$ to $((a,b),n)$ for all $b$ and $n$, or the color $b$ to $((a,b),n)$ for all $a$ and $n$. In particular, the elements $g$ and $((0,0),1) \cdot g$ always have the same color.

Therefore, if we let $C_i = c\inv(\{i\})$ for each $i$, each $C_i$ is sent to itself by the action of the element $((0,0),1)$. Since the order of this element is infinite, a standard argument (see Theorem 8.46 in \cite{K95}) shows each $C_i$ is either meager or comeager. Since the $C_i$'s partition $F(2^{\Delta_k})$, at least one, say, $C_{i_0}$, must be meager. The sets $((a,b),0) \cdot C_{i_0}$ for $a,b \in Z_k$ cover $F(2^{\Delta_k})$, though, so this is a contradiction.

It remains to construct a Borel $(k+1)$-coloring $c:F(2^{\Delta_k}) \rightarrow \{1,2,\ldots,k+1\}$. 

A subset of $x$ is called \textit{$r$-discrete}, for $r$ a natural number, if the path distance between any two points in $A$ is greater than $r$. It is an easy corollary of Proposition 4.2 in \cite{KST} that if $G$ is a Borel graph of bounded degree, then $X$ contains a Borel maximal $r$-discrete subset for every $r$.

Applying this, let $A \subset F(2^{\Delta_k})$ be a Borel maximal $3k$-discrete set. Then every $(Z_k \times Z_k)$-orbit contains at most one element of $A$. For every $x \in A$, color the $(Z_k \times Z_k)$-orbit of $x$ by setting $c(((a,b),0) \cdot x) = a$ for $1 \leq a,b \leq k$.

We now color the $(Z_k \times Z_k)$-orbits between those meeting $A$. Let $x \in A$ with $(Z_k \times Z_k)$-orbit $E$, and let $N$ be the smallest positive number such that $((0,0),N) \cdot E$ contains a point of $A$. Call that point $y$. Also note $N > 3k$ by definition of $A$. There are elements $1 \leq a_0,b_0 \leq k$ such that $y = ((a_0,b_0),N) \cdot x$. Also let $E_n$ denote the orbit $((0,0),n) \cdot E$ for $n \in \Z$, so for example $y \in A \cap E_N$. We need to extend $c$ by coloring all the $E_n$'s for $0 < n < N$. We'll proceed one $n$ at a time:

Given a $(Z_k \times Z_k)$-orbit $E'$ colored already by $c$ and a positive integer $n$, let $c_n(E')$ denote the coloring on $((0,0),n) \cdot E'$ given by $c_n(E')(z) = c( ((0,0),-n) \cdot z)$. We could try to extend our coloring $c$, by coloring $E_1$ with $c_1(E)$, then $E_2$ with $c_1(E_1)$, and so on. If we happened to have $c \res E_N = c_N(E)$, this would work out, but otherwise we will have a conflict. We can use our additional color to fix this:

Now, color $E_1$ by using $c_1(E)$, but then swapping the color $k$ with the color $k+1$. Since $c \res E$ does not use the color $k+1$, this is OK. Then $c \res E_1$ does not use the color $k$, so we can color $E_2$ by using $c_1(E_1)$, but then swapping the color $a_0$ with the color $k$. Then $c \res E_2$ does not use the color $a_0$, so we can color $E_3$ by using $c_1(E_2)$, but then swapping the color $k+1$ with the color $a_0$. Now $c \res E_3$ does not use the color $k+1$, and furthermore it looks like $c_3(E)$, but with the colors $k$ and $a_0$ swapped. Note that by performing this swap, we have arragned that $c_{N-3}(E_3)$ agrees with $c \res E_N$ on the $a_0$-th row.

We can repeat this process $k$ times, so that for each $i \leq k$, $E_{3i}$ will not use the color $k+1$ and $c_{N-3i}(E_{3i})$ will agree with $c \res E_N$ on $i$ rows. In particular, we will have $c_{N-3k}(E_{3k}) = c \res E_N$. Thus, we can color the remaining orbits $E_{3k+i}$ for $0 < i < N-3k$ using $c_{i}(E_{3k})$. Thus we have a $(k+1)$-coloring $c$ as desired. Since $A$ was Borel, it is clear that $c$ is Borel, so we are done.

\end{proof}


\section{Measure Chromatic Numbers}\label{sec:measure}


In this section, we extend our results to the measurable setting.

Let $G$ be a Borel graph on a space $X$, now equipped with a Borel probability measure $\mu$. Just as we defined Borel and Baire measurable colorings, we can define $\mu$-\textit{measurable} colorings and the $\mu$-\textit{measurable chromatic number}, denoted $\chi_\mu(G)$. The \textit{measure chromatic number} of $G$, denoted $\chi_M(G)$, is the supremum of $\chi_\mu(G)$ over all Borel probability measures $\mu$ on $X$. 

An equivalence relation $E$ on $X$ is called \textit{Borel} if it is Borel as a subset of $X \times X$. $E$ is called \textit{finite} if its equivalence classes are all finite. $E$ is called \textit{hyperfinite} if it can be written as $E = \bigcup_{n \in \omega} E_n$ for some increasing sequence $E_n$ of finite Borel equivalence relations. $G$ is called \textit{hyperfinite} if its connected component equivalence relation is hyperfinite.

In \cite{CM} (Theorem A), Conley and Miller prove an analogue of Theorem \ref{th:cm} for measure chromatic numbers with the added assumption of hyperfiniteness:

\begin{theorem}\label{th:cmmeasure}
Let $G$ be a hyperfinite locally finite Borel graph such that $\chi(G) < \aleph_0$. Then $\chi_M(G) \leq 2\chi(G) - 1$.
\end{theorem}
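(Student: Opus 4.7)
My plan is to mirror the proof of Theorem \ref{th:2ends} in the hyperfinite measured setting, replacing the role of the two-ends structure with a Borel cutting set built from the finite approximations to the connected-component equivalence relation. Since $\chi_M(G) = \sup_\mu \chi_\mu(G)$, it suffices to fix a Borel probability measure $\mu$ on $X$ and construct a $\mu$-measurable $(2\chi(G)-1)$-coloring, which is equivalent to producing a Borel $(2\chi(G)-1)$-coloring on a $\mu$-conull Borel subset.

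By hyperfiniteness, write the $G$-component equivalence relation $E_G$ as an increasing union $\bigcup_n E_n$ of finite Borel equivalence sub-relations, on a $\mu$-conull set. By replacing each $E_n$-class with its partition into $G$-connected components (still Borel, still finite), I may assume every $E_n$-class is itself $G$-connected. The aim is then to produce Borel sets $B^* \supseteq B$ analogous to those in the proof of Theorem \ref{th:2ends}: every $G$-component of $G | B^*$ is finite, $B^* \setminus B$ is $G$-independent, and every $G$-component of $G | (X - B)$ is finite, all on a $\mu$-conull subset. Once such sets are in hand, the last two paragraphs of the proof of Theorem \ref{th:2ends} transfer essentially verbatim: properly $\chi(G)$-color $G|B^*$ via Lusin-Novikov, delete the last color class to obtain a Borel $(\chi(G)-1)$-coloring of $G|B$, and separately $\chi(G)$-color $G|(X-B)$ with a disjoint palette, for a total of $2\chi(G) - 1$ colors.

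The main obstacle is constructing $B$ and $B^*$, since Miller's two-ended machinery from \cite{M09} no longer supplies them. The natural attempt is this: within each finite $G$-connected $E_n$-class $C$, Borel-select a small $G$-connected subset $S_C \subset C$ that separates the interior of $C$ from neighboring $E_n$-classes, and let $\Psi$ be the Borel collection of such $S_C$'s. Then thin $\Psi$ to a maximal Borel sub-collection whose members are pairwise at path distance at least $4$, and set $B^* = \bigcup_{S \in \Psi} S^*$ using the same one-neighborhood construction $S^* = S \cup \{x : \exists y \in S, (x,y) \in G\}$ as in the proof of Theorem \ref{th:2ends}. The measure-theoretic input is that one only needs the cutting set to have the right structural properties on a $\mu$-conull subset, so one may absorb the $\mu$-null complement of $\bigcup_n E_n$ into error. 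The delicate point---and the place where hyperfiniteness does essential work beyond what an analogous combinatorial argument would give---is to control the interaction between $G$-components that are not yet fully captured by any single $E_n$, ensuring that the candidate barrier $B^*$ both cuts each infinite $G$-component into finite pieces and has finite $G$-components itself. This is the measure-theoretic heart of the argument and the natural analogue of Miller's Lemma 5.3 plus the maximality refinement used in Section \ref{sec:2ends}.
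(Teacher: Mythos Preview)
The paper does not prove Theorem \ref{th:cmmeasure}. It is stated as a citation of Theorem A from Conley--Miller \cite{CM}, so there is no proof in the paper to compare your proposal against.

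As for the proposal itself, it is a plan rather than a proof, and the gap sits exactly where you yourself call it ``delicate.'' The two-ended argument of Section \ref{sec:2ends} works because every component is quasi-linear: finite connected sets in $\Phi$ genuinely cut their component into two pieces, the induced graph $\T$ on $\Psi$ is a bi-infinite path, and Lemmas \ref{lem:maximal} and \ref{lem:divided} follow from that structure. A general hyperfinite graph has none of this: components can be regular trees, Cayley graphs of $\Z^d$, and so on, where no finite connected barrier separates the component into finite pieces, and where the $E_n$-class boundaries need not be connected or even admit a small connected separator $S_C$ of the kind you describe. Thinning any candidate family to be pairwise at path distance $\geq 4$ can then destroy whatever cutting property it had. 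In short, the construction of $\Psi$, $B^*$, and $B$ does not transplant from the two-ended setting to the hyperfinite setting in the way your plan assumes; hyperfiniteness supplies finite approximations $E_n$, not a wall structure.

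Your final step is fine: once one has a Borel $B$ (on a $\mu$-conull set) with $G \res B$ and $G \res (X-B)$ both having finite components and $B$ obtained from a $B^*$ with one color class removed, the $(2\chi(G)-1)$-coloring follows exactly as in the last paragraphs of the proof of Theorem \ref{th:2ends}. The substance, however, is in producing such a $B$, and for that you need the actual argument of \cite{CM}, which uses the measure and the hyperfinite approximation in an essentially different way from Miller's two-ended barrier machinery.
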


As in the Baire measurable situation, the sharpness of this bound was previously unknown. All of the arguments we made in Section \ref{sec:main} in the Baire measurable setting still work in the measurable setting. Most crucially, we have $\chi_M(F(2^{\Z})) = 3$ just as we did for the Baire measurable chromatic number, and the arguments regarding $\chi_{BM}(F(2^{\Delta_k}))$ in the proof of Proposition \ref{prop:delta} still go through in the measure theoretic setting upon replacing ``meager'' and ``comeager'' with ``measure 0'' and ``measure 1'' respectively. Therefore,

\begin{prop}\label{prop:measure}
For all $k \geq 3$, $\chi_M(F(2^{\Gamma_k})) = 2k-1$ and $\chi_M(F(2^{\Delta_k})) = k+1$.
\end{prop}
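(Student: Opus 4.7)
The plan is to replay the proofs of the two propositions in Section \ref{sec:main} almost verbatim, replacing each invocation of Baire category with the corresponding measure-theoretic statement. The upper bounds are free: the Borel $(2k-1)$-coloring of $F(2^{\Gamma_k})$ produced by Theorem \ref{th:2ends} and the Borel $(k+1)$-coloring of $F(2^{\Delta_k})$ constructed in Proposition \ref{prop:delta} are $\mu$-measurable with respect to every Borel probability measure $\mu$, so $\chi_M(F(2^{\Gamma_k})) \leq 2k-1$ and $\chi_M(F(2^{\Delta_k})) \leq k+1$ are immediate.

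For the lower bounds I would fix $\mu$ to be the restriction to $F(2^\Gamma)$, where $\Gamma$ is $\Delta_k$ or $\Gamma_k$, of the symmetric Bernoulli measure on $2^\Gamma$. For each non-identity $h\in\Gamma$ the fixed-point set $\{x\st h\cdot x=x\}$ is closed and $\mu$-null, so $F(2^\Gamma)$ is conull and $\mu$ is a well-defined shift-invariant Borel probability measure on it. The one extra ingredient beyond invariance is ergodicity of the $\Z$-subaction generated by $((0,0),1)$: this is standard, since the restriction of a Bernoulli shift to the infinite cyclic subgroup generated by any infinite-order element is itself a Bernoulli system and hence ergodic (in fact mixing).

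With those in hand, $\chi_M(F(2^{\Delta_k})) \geq k+1$ follows the argument of Proposition \ref{prop:delta}: a hypothetical $\mu$-measurable $k$-coloring has color classes $C_i$ each of which is $((0,0),1)$-invariant (by the same rigidity of $k$-colorings of $\cay(\Delta_k)$), and hence $\mu$-null or $\mu$-conull by ergodicity; pigeonhole gives some $\mu$-null $C_{i_0}$, but the $k^2$ shift-translates $((a,b),0)\cdot C_{i_0}$ remain null and together cover $F(2^{\Delta_k})$, a contradiction. For $\chi_M(F(2^{\Gamma_k})) \geq 2k-1$, I would push the Bernoulli measure $\mu_\Z$ on $F(2^\Z)$ forward along the continuous $\Z$-equivariant map $g:F(2^\Z)\to F(2^{\Gamma_k})$ from the preceding proposition to obtain $\nu:=g_\ast\mu_\Z$; any $\nu$-measurable $(2k-2)$-coloring $c$ pulls back to a $\mu_\Z$-measurable $c\circ g$, and the associated $d\circ g$ is then a $\mu_\Z$-measurable $2$-coloring of $F(2^\Z)$, contradicting the fact $\chi_M(F(2^\Z))=3$ mentioned in the excerpt.

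The only non-bookkeeping step in all of this is the ergodicity of the cyclic subaction generated by $((0,0),1)$; since this is a classical feature of Bernoulli shifts, I do not expect a genuine obstacle, which matches the authors' own remark that ``all of the arguments \ldots still work.''
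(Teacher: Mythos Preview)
Your argument for $\Delta_k$ is correct and is exactly the translation the paper has in mind. Your argument for $\Gamma_k$, however, has a genuine gap in the choice of measure.

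You take $\nu=g_\ast\mu_\Z$ on $F(2^{\Gamma_k})$ and claim that if $c$ is a $\nu$-measurable $(2k-2)$-coloring then $d\circ g$ is $\mu_\Z$-measurable. But $d(x)$ is computed from the values $c(((a,b),0)\cdot x)$ for \emph{all} $(a,b)\in Z_k\times Z_k$, and for $(a,b)\neq(0,0)$ the point $((a,b),0)\cdot g(y)$ lies outside the image of $g$, hence outside $\operatorname{supp}\nu$. Since the translates $((a,b),0)_\ast\nu$ are pairwise singular, $\nu$ is not even $(Z_k\times Z_k)$-quasi-invariant, and $\nu$-measurability of $c$ gives no control whatsoever over $x\mapsto c(((a,b),0)\cdot x)$ on $\operatorname{supp}\nu$. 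So you cannot conclude that $d$ (and hence $d\circ g$) is measurable with respect to your chosen measure.

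The fix is easy: replace $\nu$ by any $(Z_k\times Z_k)$-quasi-invariant measure. For instance, use $\nu'=\frac{1}{k^2}\sum_{(a,b)}((a,b),0)_\ast\nu$; this is $\Gamma_k$-invariant, each $((a,b),0)_\ast\nu\ll\nu'$, and now $\nu'$-measurability of $c$ does make all the maps $x\mapsto c(((a,b),0)\cdot x)$, and hence $d$, $\nu$-measurable, so $d\circ g$ is $\mu_\Z$-measurable and you finish as you intended. Even more simply, you can take the Bernoulli measure $\mu$ on $F(2^{\Gamma_k})$ itself: then $d$ is $\mu$-measurable, $A:=d^{-1}(1)$ has $((0,0),2)\cdot A=A$ and $\mu(A)=\tfrac12$, contradicting ergodicity of the $((0,0),2)$-subaction---the same Bernoulli-restriction fact you already invoked for $\Delta_k$. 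Either route repairs the argument with no new ideas; the paper's own remark that ``the arguments still work'' presumably has one of these in mind.
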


As was noted in the introduction, these graphs are hyperfinite since their connected components all have two ends. Thus the bound in Theorem \ref{th:cmmeasure} is indeed sharp:

\begin{theorem}
Let $k \geq 3$. There is a Borel hyperfinite $3(k-1)^2$-regular graph, say, $G_k$, for which $\chi(G_k) = k$ but $\chi_M(G_k) = 2k-1$.
\end{theorem}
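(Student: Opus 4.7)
The plan is to take $G_k = F(2^{\Gamma_k})$, the shift graph of the marked group $\Gamma_k$ built in Section \ref{sec:main}. The theorem is then essentially a packaging result: every property that needs checking has already been established, and the work is simply to assemble the pieces and verify the one auxiliary claim (hyperfiniteness) not explicitly recorded above.

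First, regularity and the chromatic number are immediate from the construction. The generating set of $\Gamma_k$ is $S \times \{-1,0,1\}$ with $|S| = (k-1)^2$, so every vertex of $\cay(\Gamma_k)$, hence of $F(2^{\Gamma_k})$, has degree exactly $3(k-1)^2$. The equality $\chi(G_k) = k$ was already derived in Section \ref{sec:main} via the isomorphism (\ref{eq:iso}) between $\cay(\Gamma_k)$ and $\cay(\Delta_k)$ and the explicit $k$-coloring of $\cay(\Delta_k)$ defined just before Lemma \ref{lem:invariance}. The equality $\chi_M(G_k) = 2k-1$ is exactly the first assertion of Proposition \ref{prop:measure}.

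The only remaining step is hyperfiniteness. Each connected component of $F(2^{\Gamma_k})$ is isomorphic as a graph to $\cay(\Gamma_k)$, which has two ends (again via (\ref{eq:iso}) and the fact that $\cay(\Delta_k)$ has two ends, as observed in Section \ref{sec:main}). As noted in the comment following Theorem \ref{th:2ends}, a locally finite Borel graph whose components all have two ends is hyperfinite (this is the result of Miller \cite{M09} referenced there). Thus $G_k$ is hyperfinite, completing the verification.

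The genuinely new content sits in Proposition \ref{prop:measure}, whose proof in turn mirrors that of the corresponding Baire measurable statement (Proposition for $\Gamma_k$ earlier in Section \ref{sec:main}, together with Proposition \ref{prop:delta}) after substituting ``measure zero''/``measure one'' for ``meager''/``comeager'' in the ergodicity step and using $\chi_M(F(2^\Z)) = 3$ in place of $\chi_{BM}(F(2^\Z)) = 3$. So the present theorem itself has no real obstacle: given Proposition \ref{prop:measure} and Miller's hyperfiniteness result, the proof is a one-line citation of both.
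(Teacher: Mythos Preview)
Your proposal is correct and matches the paper's approach exactly: the paper also takes $G_k = F(2^{\Gamma_k})$ and simply cites Proposition~\ref{prop:measure} for $\chi_M(G_k)=2k-1$ together with the remark that two-endedness implies hyperfiniteness via \cite{M09}, the remaining properties having been established in Section~\ref{sec:main}. In fact the paper gives no separate proof beyond that one-sentence observation, so your write-up is, if anything, more detailed than the original.
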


Similarly, alongside Theorem \ref{th:fw}, Weilacher \cite{FW} proves that there are marked groups with isomorphic Cayley graphs can have measure chromatic numbers which differ by one, but notes that it is open whether or not these numbers can differ by more than one. By Proposition \ref{prop:measure}, we have resolved this as well:

\begin{corollary}\label{cor:differencemeasure}
Let $k$ be a natural number. There are marked groups $\Gamma$ and $\Delta$ with isomorphic Cayley graphs but for which $|\chi_M(F(2^\Gamma)) - \chi_M(F(2^\Delta))| > k$. 
\end{corollary}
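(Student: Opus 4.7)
The plan is to directly reuse the marked groups $\Gamma_m$ and $\Delta_m$ from Section \ref{sec:main} for a sufficiently large parameter $m$ depending on $k$, and then invoke Proposition \ref{prop:measure} to compute the difference of their measure chromatic numbers. Since Proposition \ref{prop:measure} gives $\chi_M(F(2^{\Gamma_m})) = 2m-1$ and $\chi_M(F(2^{\Delta_m})) = m+1$ for every $m \geq 3$, the difference equals $m-2$, which can be made arbitrarily large.

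Concretely, given a natural number $k$, I would choose $m = k+3$ (so in particular $m \geq 3$, so Proposition \ref{prop:measure} applies) and take $\Gamma = \Gamma_m$ and $\Delta = \Delta_m$. The isomorphism of Cayley graphs is exactly the map (\ref{eq:iso}) constructed in Section \ref{sec:main}, so there is nothing new to check on that front. Then
\[
\chi_M(F(2^\Gamma)) - \chi_M(F(2^\Delta)) = (2m-1) - (m+1) = m-2 = k+1 > k,
\]
so in particular $|\chi_M(F(2^\Gamma)) - \chi_M(F(2^\Delta))| > k$, as required.

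There is essentially no obstacle here: all the real content, namely the isomorphism of Cayley graphs, the upper bound $\chi_M(F(2^{\Gamma_m})) \leq 2m-1$ (from Theorem \ref{th:cmmeasure} applied to $F(2^{\Gamma_m})$, whose components have two ends and hence is hyperfinite), the matching lower bound through the reduction to $\chi_M(F(2^\Z)) = 3$, and the sharp value $\chi_M(F(2^{\Delta_m})) = k+1$ via the measure-theoretic adaptation of Proposition \ref{prop:delta}, has already been packaged into Proposition \ref{prop:measure}. The only thing to be careful about is the direction of the inequality: since $\chi_M(F(2^{\Gamma_m})) > \chi_M(F(2^{\Delta_m}))$, the absolute value is redundant, but keeping it costs nothing and matches the statement. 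Thus the proof is a one-line application of Proposition \ref{prop:measure} after picking $m$ large enough.
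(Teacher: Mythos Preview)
Your proof is correct and matches the paper's approach exactly: the paper simply writes ``By Proposition \ref{prop:measure}, we have resolved this as well,'' and your argument makes explicit the obvious choice of parameter $m$ so that $(2m-1)-(m+1)=m-2>k$. (One minor typo: in your final paragraph you wrote $\chi_M(F(2^{\Delta_m})) = k+1$ where you meant $m+1$.)
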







\section*{Acknowledgements}

We thank C. Conley for fruitful discussions and helpful comments on earlier drafts of this paper.

This work was partially supported by the ARCS foundation, Pittsburgh chapter. 

\pagebreak

\end{document}